\def
\renewcommand{\section}{%
\@startsection{section}{1}{\z@}
{0.5truecm plus -1ex minus -.2ex}%
{1.0ex plus .2ex}{\bfseries\large}}
\def
\numberwithin{equation}{section} 
\newtheorem{thm}{Theorem}[section]
\newtheorem{corollary}[thm]{Corollary}
\newtheorem{lem}[thm]{Lemma}
\theoremstyle{definition}
\newtheorem{df}{Definition}[section]
\newtheorem{remark}{Remark}[section]
\newcommand{\ep}{\varepsilon}
\newcommand{\pa}{\partial}
\newcommand{\Rn}{\mathbb{R}^n}
\begin{document}
\footnote[0]
	{2010{\it Mathematics 
	Subject Classification\/}. 
	Primary: 35B44, 35K51, 35K59; Secondary: 
	35Q92, 92C17.
	}
\footnote[0]
	{{\it Key words and phrases\/}{\rm :} 
	blow-up time;
	chemotaxis system;
	nonlinear diffusion.
	}
\begin{center}
	\Large{{\bf 
	Effect of nonlinear diffusion on a lower bound
	\\ 
	for the blow-up time
	\\
	in a fully parabolic chemotaxis system
   }}
\end{center}
\vspace{5pt}
\begin{center}
	\hspace{-10pt}Teruto Nishino%
	\footnote[0]{{E-mail addresses}: 
		{\tt teruto.nishino@gmail.com},   
		{\tt yokota@rs.kagu.tus.ac.jp}}\\
	Tomomi Yokota%
	\footnote{Corresponding author.}
	\footnote{Partially supported by Grant-in-Aid 
		for Scientific Research (C), 
		No.\,25400119.}\\
	\vspace{12pt}
	Department of Mathematics\\
	Tokyo University of Science\\
\end{center}
\begin{center}    
	\small \today
\end{center}
\vspace{2pt}
\newenvironment{summary}
{\vspace{.5\baselineskip}\begin{list}{}{%
	\setlength{\baselineskip}{0.85\baselineskip}
	\setlength{\topsep}{0pt}
	\setlength{\leftmargin}{12mm}
	\setlength{\rightmargin}{12mm}
	\setlength{\listparindent}{0mm}
	\setlength{\itemindent}{\listparindent}
	\setlength{\parsep}{0pt}
	\item\relax}}{\end{list}
	\vspace{.5\baselineskip}}
\begin{summary}
{\footnotesize {\bf Abstract.}
This paper deals with 
a lower bound 
for the blow-up time 
for solutions 
of the fully parabolic chemotaxis system
\begin{equation*}
	\begin{cases}
		u_t=\nabla \cdot [(u+\alpha)^{m_1-1}
		\nabla u-\chi u(u+\alpha)^{m_2-2}
		\nabla v]
		& {\rm in} \; \Omega \times (0,T),
		\\[1mm]
		v_t=\Delta v-v+u
		& {\rm in} \; \Omega \times (0,T)
		\\[1mm]
	\end{cases}
\end{equation*}
under Neumann boundary conditions 
and initial conditions, 
where $\Omega$ is a general bounded domain 
in $\Rn$ with smooth boundary, 
$\alpha>0$, 
$\chi>0$, 
$m_1, m_2 \in \mathbb{R}$ 
and 
$T>0$.  
Recently, 
Anderson--Deng \cite{Anderson-Deng_2017} 
gave a lower bound for the blow-up time 
in the case that $m_1=1$
and $\Omega$ is a convex 
bounded domain. 
The purpose of this paper is to generalize 
the result in \cite{Anderson-Deng_2017} 
to the case that $m_1 \neq 1$ and $\Omega$ 
is a non-convex bounded domain. 
The key to the proof is 
to make a sharp estimate by using 
the Gagliardo--Nirenberg inequality 
and an inequality for boundary integrals.
As a consequence, the main result of this paper 
reflects the effect of nonlinear diffusion 
and 
need not assume the convexity of $\Omega$.
}
\end{summary}
\newpage
\section{Introduction}\label{Section 1}
\indent
In this paper we consider a 
lower bound for the blow-up time 
in the following 
fully parabolic chemotaxis system 
with nonlinear diffusion:                                                                                                                         
\begin{equation}\label{KS}
	\begin{cases}
		u_t=\nabla \cdot [(u+\alpha)^{m_1-1} 
		\nabla u-\chi u(u+\alpha)^{m_2-2} 
		\nabla v] 
		& 
		{\rm in} \ \Omega \times (0,T), 
		\\[1mm]
		v_t=\Delta v-v+u 
		& 
		{\rm in} \ \Omega \times (0,T), 
		\\[2mm] 
		\nabla u \cdot \nu 
		= 
		\nabla v \cdot \nu 
		= 0 
		& {\rm on} \ \partial\Omega \times (0,T), 
		\\[1mm] 
		u(\cdot, 0)=u_0, 
		\ v 
		(\cdot, 0)=v_0 
		& {\rm in} \ \Omega, 
	\end{cases}
\end{equation}
where $\Omega$ is a general bounded domain 
in $\Rn$ ($n\in \mathbb{N}$)
with smooth boundary $\pa \Omega$ 
and $\nu$ 
is the outward normal vector 
to $\pa\Omega$ and $T>0$. 
The initial data 
$u_0$ and $v_0$ 
are 
supposed to be nonnegative functions 
such that $u_0 \in C(\overline{\Omega})$ 
and 
$v_0 \in 
C^1(\overline{\Omega})$. 
Also we assume that 
\begin{align*}
\alpha>0, \quad \chi>0, \quad m_1, m_2
\in \mathbb{R}.
\end{align*}
In the system \eqref{KS}, 
the unknown function	 
$u(x,t)$ represents the density of the cell 
population and the unknown function $v(x,t)$ 
shows the concentration of the signal substance 
at place $x$ and time $t$.
The 
system \eqref{KS} with the 
simplest choices 
$m_1=1$ and $m_2=2$ 
describes a part of life cycle of 
cellular slime molds with chemotaxis and it 
was proposed by 
Keller--Segel \cite{Keller-Segel_1970} 
in 1970. 
After that, 
a quasilinear system such as \eqref{KS} 
was proposed by Painter--Hillen 
\cite{Painter-Hillen_2002}. 
A number of variations of the original 
Keller--Segel system 
are proposed and investigated 
(see e.g., 
Bellomo--Bellouquid--Tao--Winkler 
\cite{Bellomo-Bellouquid-Tao-Winkler_2015}, 
Hillen--Painter \cite{Hillen-Painter_2009} 
and 
Horstmann 
\cite{Horstmann_2003,Horstmann_2004}). 
%

\indent
According to a continuity 
model, the first equation in \eqref{KS}
has the flux vector 
$F=-\left[(u+\alpha)^{m_1-1}\nabla u-\chi 
u(u+\alpha)^{m_2-2}\nabla v\right]$. 
We can recognize that 
$(u+\alpha)^{m_1-1}\nabla u$ represents 
the diffusive flux and 
$-\chi u(u+\alpha)^{m_2-2}\nabla v$ represents 
the chemotactic flux 
modelling 
undirected cell migration 
and the advective flux with velocity 
dependent 
on the gradient of the signal. 
More precisely, when cellular slime molds 
plunge into hunger, they move towards higher 
concentrations of the chemical substance 
secreted by cells.

\indent
From a mathematical point of view, 
$u$ in 
\eqref{KS}
enjoys the mass conservation 
property: 
\begin{align*}
	\int_{\Omega}u(\cdot,t)=\int_{\Omega}u_0
\end{align*}
for all $t \in (0,T)$. 
It is a meaningful question 
whether solutions of \eqref{KS} 
remain bounded 
or blow up. 
As to this question, 
it is known that 
the borderline 
between boundedness and 
blow-up is 
the case that $m_2=m_1+\frac{2}{n}$, 
$m_1 \ge 1$, $m_2 \ge 2$. 
According to the result 
established by 
Horstmann--Winkler 
\cite[Theorems 4.1 and 6.1]
{Horstmann-Winkler_2005} 
in the case $m_1=1$, 
it can be expected that 
\eqref{KS} has a global bounded solution 
in the case that 
$m_2<m_1+\frac{2}{n}$ 
and 
a blow-up solution 
in the case that 
$m_2>m_1+\frac{2}{n}$.  
Indeed, in the case that 
$\Omega$ is a bounded domain and 
$m_2<m_1+\frac{2}{n}$, 
there exists 
a global bounded solution 
of 
\eqref{KS} (see Tao--Winkler 
\cite{Tao-Winkler_2012}, 
Ishida--Seki--Yokota
\cite{Ishida-Seki-Yokota_2014} and 
Senba--Suzuki 
\cite{Senba-Suzuki_2006}). 
%
In addition, this result 
was shown also for the 
degenerate chemotaxis system 
(\eqref{KS} 
with $\alpha =0$) 
(see Ishida--Yokota 
\cite{Ishida-Yokota_2012-2, 
Ishida-Yokota_2020} 
when $\Omega = \Rn$ and  
$m_2 < m_1 + \frac{2}{n}$;  
\cite{Ishida-Seki-Yokota_2014} when $\Omega$ 
is a 
bounded domain 
and  
$m_2 < m_1 + \frac{2}{n}$; 
Mimura 
\cite{Mimura_2017} 
when $\Omega$ is a bounded domain with 
Dirichlet--Neumann boundary condition, 
$m_2=2$ and 
$m_1 > 2-\frac{2}{n}$). 
If $m_2 \ge m_1+\frac{2}{n}$, then 
the results are  
divided by the size of initial data. 
For example, 
the system \eqref{KS} has a 
global solution 
with small initial data when $\Omega 
= \Rn$ and $\alpha=0$ 
even if 
$m_2 \ge m_1+\frac{2}{n}$ 
(see Ishida--Yokota 
\cite{Ishida-Yokota_2012}). 
On the other hand, 
in the case that 
$\Omega = B_R 
:= \{x\in \Rn \big| |x| <R \}$ 
$(R>0)$, 
$m_1=1$, $m_2=2$, 
$n \ge 3$, which implies 
$m_2> m_1 + \frac{2}{n}$, 
there exist 
initial data such that the 
radially symmetric solution 
of \eqref{KS} blows up in finite time 
(see Winkler \cite{Winkler_2013}). 
The result was extended to the case that 
$\Omega = B_R$, 
$m_2>m_1+\frac{2}{n}$, $n \ge 2$ 
(see Cie\'{s}lak--Stinner 
\cite{Cieslak-Stinner_2012, 
Cieslak-Stinner_2014} when $\alpha>0$, 
Hashira--Ishida--Yokota 
\cite{Hashira-Ishida-Yokota_2018} when 
$\alpha=0$). 
In the most important case that 
$\Omega = B_R$, $m_1=1$, $m_2=2$, $n=2$, 
which implies $m_2=m_1+\frac{2}{n}$,
there 
exist initial data such that the corresponding   
solutions of \eqref{KS} 
blow up 
in finite time 
(see Mizoguchi--Winkler 
\cite{Mizoguchi-Winkler}).

\indent
We are especially interested in 
a {\it lower bound\/ }%
for the blow-up time 
for 
solutions of \eqref{KS},  
because  
it seems to be 
important to know how 
$m_1$ 
affects on the blow-up time 
for solutions of \eqref{KS}. 
The study 
of a lower bound for 
the blow-up time seems 
to be interesting 
widely for general 
parabolic systems 
(see Payne--Schaefer 
\cite{Payne-Schaefer_2006} and 
Enache 
\cite{Enache_2011}), 
wave equations 
(see Philippin 
\cite{Philippin_2015}) 
and heat equations 
(see Payne--Philippin--Vernier Piro 
\cite{Payne-Philippin-Piro_2010}). 
Moreover, 
explicit lower bounds for the blow-up time 
for solutions of 
various semilinear parabolic equations 
were obtained by \cite{Payne-Schaefer_2006}. 
As to chemotaxis systems, 
Payne--Song 
\cite{Payne-Song_2010, Payne-Song_2012} 
established a lower bound of blow-up time 
for solutions of \eqref{KS} with 
$m_1=1$ and $m_2=2$ 
in the form 
\begin{align*}
\widetilde{t}^{\ast} 
\ge \int_{\Phi_1(0)}^{\infty}
\frac{d\xi}{V{\xi}^{\frac{3}{2}}+
W{\xi}^2} \quad (n=2)  
\end{align*}
and 
\begin{align*}
\widetilde{t}^{\ast} \ge \int_{\Phi_1(0)}
^{\infty}
\frac{d\xi}{X{\xi}^{\frac{3}{2}}+
Y{\xi}^3} \quad (n=3); 
\end{align*}
note that $\widetilde{t}^{\ast}$ means 
the blow-up time  
in $\Phi_1$-measure, 
i.e., 
$\lim_{t \nearrow \widetilde{t}^{\ast}}
\Phi_1(t)
= \infty$, where $\Phi_1(t)$ 
is defined as 
\begin{align}\label{oldphi}
\Phi_1(t)
:=\kappa \int_{\Omega}{u(\cdot,t)}^2 
+ \int_{\Omega} |\Delta v(\cdot,t)|^2 \quad 
(t>0)
\end{align}
with some $\kappa>0$. 
When $\Omega$ is a convex bounded domain and 
$m_1=1$, 
Li--Zheng \cite{Li-Zheng_2013} gave  
a lower bound for the blow-up time for 
solutions of \eqref{KS} by using \eqref{oldphi} 
in the case that 
$m_2 \in (\frac{5}{3},2]$, 
$n=3$ and in the case that 
$m_2 \in [2,3)$, 
$n=2$. 
After that, 
when $\Omega=B_1$, $\alpha=1$ 
and $m_1=1$, 
in the case that 
$m_2 \in [\frac{5}{3},3]$ and $n=3$, 
Tao--Vernier Piro 
\cite{Tao-Piro_2016} introduced 
the measure $\Phi_2(t)$ 
in the form 
\begin{align}\label{newphi}
	\Phi_2(t)
:=
		\int_{\Omega} \left(u(\cdot,t) 
		+ 
		1\right)^p 
		+
		\int_{\Omega} \left|\nabla v(\cdot,t)
\right|^{2q} \quad 
(t>0)
\end{align}
for suitable $p,q>1$ 
($p=2$ and $q=4$ when 
$m_2 \in [\frac{5}{3},2]$;  
$p=5$ and $q=22$ when  
$m_2 \in (2,3]$) 
from the view point of local existence 
of classical solutions to \eqref{KS} and 
an initial datum $v_0 \in W^{1,q}(\Omega)
$ 
(see \cite[Lemma 3.1]
{Bellomo-Bellouquid-Tao-Winkler_2015}). 
This restriction on 
$m_2$ and $n$ was removed by 
Anderson--Deng \cite{Anderson-Deng_2017} 
when $\Omega$ is a 
convex bounded domain 
and 
$m_1=1$. 
Furthermore, 
as a new attempt to 
estimating a lower bound 
for the blow-up time in the above sense, 
Marras--Vernier Piro--Viglialoro 
\cite{Marras-Piro-Viglialoro_2015, 
Marras-Piro-Viglialoro_2016} 
obtained a lower bound for the blow-up 
time of the 
more generalized equation with a source 
term: 
\begin{equation}\label{source}
	\begin{cases}
		u_t=\nabla \cdot [
		\nabla u-k_1(t) u^{m_2-1}
		\nabla v] + {\boldmath f(u)}
		& {\rm in} \; \Omega \times (0,T),
		\\[1mm]
		v_t=k_2(t)\Delta v-k_3(t)v+k_4(t)u
		& {\rm in} \; \Omega \times (0,T)
		\\[1mm]
	\end{cases}
\end{equation}
under Neumann boundary conditions 
and initial conditions, 
where $k_i(t)\ (i=1,2,3,4)$ are nonnegative 
smooth functions of $t$,  
$m_2 \in [2,3)$ when $n=2$, 
$m_2 \in (\frac{5}{3},2)$ when $n=3$, 
$f$ satisfies $f(u) \le cu^2$ with $c>0$. 
A similar result 
for the parabolic--elliptic version 
of \eqref{source}
was deduced by Jiao--Zeng 
\cite{Jiao-Zeng_2018}. 

\indent 
Now we focus on the 
studies 
obtained by 
\cite{Tao-Piro_2016} and 
\cite{Anderson-Deng_2017} 
which 
gave 
a lower bound for 
the blow-up time for solutions of \eqref{KS} 
under the following conditions:
\begin{itemize}
\item 
``$m_1=1$", 
$m_2 \in [\frac{5}{3},3]$, 
$n=3$, 
$\Omega$ is a 
{\it unit ball }%
$B_1 \subset \boldmath 
\mathbb{R}^3$\ (\cite{Tao-Piro_2016}); 
\item 
{``$m_1=1$"}, 
$m_2\in \mathbb{R}$, 
$n \in \mathbb{N}$, 
$\Omega$ is a 
{\it convex }%
bounded domain 
in $\Rn$\ (\cite{Anderson-Deng_2017}).
\end{itemize}
However, there is 
still room
for improvements 
in these results. 
More precisely, 
we cannot find any results in 
the nonlinear case that $m_1 \neq 1$ and $
\Omega$ is 
a non-convex bounded domain.  
Hence 
the current  situation is summarized in 
Table \ref{sumarry}.
\vspace{2mm}
\begin{table}[h]
\centering
{\renewcommand\arraystretch{1}
\begin{tabular}{|l||c|c|c|}
\hline
\backslashbox[31mm]
  &
	$\Omega$:\ ball &
	$\Omega$:\ convex &
	$\Omega$:\ non-convex
\\\hhline{|=#=|=|=|}
	\begin{tabular}{c}
Linear case \\[-1mm] ($m_1=1$)
\vspace{1mm}
\end{tabular}    &
  Tao--Vernier Piro \cite{Tao-Piro_2016} &
  Anderson--Deng \cite{Anderson-Deng_2017} &
	{\bf No work}
\\\hline
 \begin{tabular}{c}
Nonlinear case \\[-1mm] \hspace{-6mm}($m_1 \neq 
1$)\vspace{1mm}
\end{tabular} &
  {\bf No work} &
  {\bf No work} &
 	{\bf No work} 
\\\hline
\end{tabular}
}
\caption{The known results 
on lower bounds for the blow-up time 
in \eqref{KS}}\label{sumarry}
\end{table}

\noindent
Here,  
if some results 
can be given in 
the nonlinear case that $m_1 \neq 1$, 
then the following natural question 
arises: 
%
\begin{align*}\tag*{{\bf (Question)}}\label{Q}
\text{\it How does $m_1$ 
affect on the blow-up time 
for solutions of \eqref{KS}? }
\end{align*} 
Since the 
blow-up for solutions of the system \eqref{KS} 
describes the aggregation 
of cells and 
strong diffusion seems to 
prevent the aggregation 
and to cause delay in the blow-up,  
we can intuitively 
conjecture 
the answer to this question as follows: 
\begin{align*}\tag*{{\bf (Conjecture)}} 
\label{C}
\text{\it The larger $m_1$ is, the larger 
the blow time $t^{\ast}$ 
for solutions of \eqref{KS} is.}
\end{align*}

\indent
The first purpose of this paper 
is completely 
to fill in {\bf ``No work''} 
in Table \ref{sumarry}. 
The second purpose of this paper is
to present 
an answer to the above question 
and justify the above conjecture, 
that is, to give an explanation for 
effect of 
nonlinear diffusion 
and the 
chemotaxis term for the 
blow-up time 
in a parabolic--parabolic chemotaxis system. 

\indent
Furthermore, we should mention 
how we can derive an explicit 
lower bound 
for the blow-up time for solutions 
of \eqref{KS}. In the previous works, 
the blow-up time for classical solutions of 
\eqref{KS} can be obtained by using the
energy function $\Phi_2(t)$ 
defined as \eqref{newphi}. 
However, there is a gap 
between the blow-up time for 
$\Phi_2(t)$ with $L^p(\Omega) \times 
W^{1,2q}(\Omega)$-norm 
of $(u,v)$ and that for solutions 
in the classical sense, i.e., 
in the sense of $L^{\infty}(\Omega)$-norm 
of $u$ (for details see 
Definition \ref{tdf}). 
Indeed, 
assume that $\Omega$ 
is a bounded domain. Then we know that 
\begin{align}\label{norm}
\|u(\cdot,t)\|_{L^{p}(\Omega)} \le 
|\Omega|^{\frac{1}{p}}\|
u(\cdot,t)\|
_{L^{\infty}(\Omega)}.
\end{align}
In view of \eqref{norm} 
we see that if a solution of 
\eqref{KS} blows up 
in $L^p(\Omega)$-norm, then it 
blows up in $L^\infty(\Omega)$-norm 
at the same time; however,  
even if 
a solution of \eqref{KS} 
blows up in $L^{\infty}(\Omega)$-norm, we 
cannot predicate whether the solution 
blows up or not 
in $L^p(\Omega)$-norm. 
From the numerical resolution method, 
it seems that the 
blow-up time for solutions 
of \eqref{KS} 
in $\Phi_2$-measure has a long delay 
(see 
Farina--Marras--Viglialoro 
\cite
[FIGURE 1]
{Ferina-Marras-Vigilialoro_2015}).

Another purpose of this paper is to 
bridge a gap between
the blow-up time for 
solutions of \eqref{KS} 
in $\Phi_2$-measure 
and that 
in the classical sense. 
The key to accomplishing this purpose is 
a refined extensibility criterion 
established 
by Freitag 
\cite[Theorem 2.2]{Freitag_2018}.

\indent
Before stating
the main result, 
we define classical solutions of \eqref{KS} 
and the 
blow-up time as follows: 
\begin{df}
A pair $(u,v)$ is 
called a classical solution 
of \eqref{KS} if 
\begin{align*}
	&
	u \in C\left(\overline{\Omega} 
	\times [0,T)\right)
	\cap 
	C^{2,1}\left(\overline{\Omega} 
	\times (0,T)\right),\\[0mm]
	&v \in C\left(\overline{\Omega} 
	\times [0,T)\right)
	\cap 
	C^{2,1}\left(\overline{\Omega} 
	\times (0,T)\right)
	\cap
	L_{\rm loc}^{\infty}
	\left([0,T);W^{1,\infty}(\Omega)
\right)
\end{align*}
and $u,v$  
satisfy \eqref{KS} 
	in the classical sense.
\end{df}
\begin{remark}
Local existence 
and uniqueness 
of classical solutions to 
\eqref{KS} are  
known (see Lemma \ref{or} 
and Remark \ref{local} (i) below). 
\end{remark}
\begin{df}\label{tdf}
	Let $t^{\ast}$ be a 
	maximal 
	time 
	for which a solution of \eqref{KS} 
exists 
	for $0 \leq t < t^{\ast}$. 
	Then $t^{\ast}$ 
	is called a {\it blow-up time in the 
classical sense} if 
	$t^{\ast} < \infty$ and 
\begin{align}\label{btc}
	\lim\limits_{t \nearrow t^{\ast}} 
	(\|u(\cdot,t)\|_{L^{\infty}(\Omega)}
	+
	\|v(\cdot,t)
	\|_{W^{1,\infty}(\Omega)})
	=\infty. 
\end{align}
\end{df}
In order to state the main theorem 
we shall give the following conditions 
for the parameters $p>1$ and $q>\frac{1}{
\eta-1}$:
	\begin{align}\tag{C1}\label{C1}
		&
		p> 
		\max 
		\left\{ 
		\frac{n}{2}(m_2-m_1),\ 
		n(m_2-m_1-1),\ 
		n
		\right\},
		\\[3mm]\tag{C2}\label{C2} 
		& 
		p>{\rm max} 
		\left\{ 
		\frac{q(2m_2-m_1-3)}{q\eta-q-1},\ 
		-2m_2+m_1+3,\ 
		\frac{2q}{q\eta-q+1},\ 
		\frac{\eta(m_1-1)}{(\eta-1)
		(\eta-2)}
		\right\},
	\end{align}
	where $\eta$ is defined as 
	\begin{align}\label{df of eta}
\begin{cases}
\eta \in (1,2)\ \text{is any} & (n=1,2), 
\\[2mm]
\eta := \dfrac{n}{n-1} & (n \ge 3). 
\end{cases}
\end{align}

\indent
 We now state the main result of this paper. 
The main result 
gives a lower bound 
for the blow-up time 
for solutions of \eqref{KS} 
with nonlinear diffusion. 
\begin{thm}\label{main}
Let $t^{\ast}<\infty$ be 
the blow-up time in the classical sense 
for a classical solution $(u,v)$ 
of \eqref{KS}. 
Then 
there exist constants 
$A=A(m_1)>0$, $B>0$, $C=C(m_1)>0$, 
$D \ge 0$ and $p>1$, $q>\frac{1}{\eta-1}$
fulfilling \eqref{C1}, \eqref{C2} such that 
\begin{align}\label{blow-up-time}
		t^{\ast} \geq 
		\int_{\Phi(0)}^{\infty} 
		\frac{d\tau} 
		{A{\tau}^{f(\eta,r)} 
		+ 
		B{\tau}^{f(\eta,1)} 
		+ 
		C\tau^{\eta}%
		 +
		D},
	\end{align}
	where $\Phi > 0$, $f>1$ and $r>0$ 
	are defined as 
	\begin{align}\label{def of phi}
		&
		\Phi(t) := \frac{1}{p} 
		\int_{\Omega} \left(u(\cdot,t) 
		+ 
		\alpha\right)^p 
		+ \frac{1}{q}
		\int_{\Omega} |\nabla v(\cdot,t)|^{2q} 
\quad (t>0), 
		\\[2mm]\label{def of f}
		&f(\eta,s) := 1+ 
\frac{\eta-1} 
		{n\left
(\frac{1}{n}-\frac{\eta}{2}
+\frac{1}
		{2s}\right)} 
\quad (s>0), 
		\\[1mm]\label{def of r}
		&
r = r(m_1):= \frac{p}{p+m_1-1}, 
	\end{align}
and $\eta \in (1,2)$ 
is defined as \eqref{df of eta}. 
\end{thm}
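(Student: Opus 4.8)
The plan is to derive a differential inequality of the form $\Phi'(t)\le A\,\Phi^{f(\eta,r)}+B\,\Phi^{f(\eta,1)}+C\,\Phi^{\eta}+D$ and then integrate it: separating variables gives $\int_{\Phi(0)}^{\Phi(t)}\frac{d\tau}{A\tau^{f(\eta,r)}+B\tau^{f(\eta,1)}+C\tau^{\eta}+D}\le t$, and since $\Phi(t)\to\infty$ as $t\nearrow t^{\ast}$ (this needs to be checked separately using the extensibility criterion of Freitag, which controls the $L^\infty$-blow-up of $u$ and the $W^{1,\infty}$-blow-up of $v$ by $L^p$- and $W^{1,2q}$-norms once $p,q$ satisfy \eqref{C1}), letting $t\nearrow t^{\ast}$ yields \eqref{blow-up-time}. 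So the whole theorem reduces to establishing the master differential inequality for $\Phi$.

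To obtain it, first I would differentiate the two pieces of $\Phi$ in time. For $\frac{1}{p}\frac{d}{dt}\int_\Omega(u+\alpha)^p$, I substitute the first PDE, integrate by parts using the Neumann condition, and get a good (negative) diffusion term $-c\int_\Omega(u+\alpha)^{p+m_1-3}|\nabla u|^2$ — which, after writing $\nabla(u+\alpha)^{\frac{p+m_1-1}{2}}$, becomes $-c\int_\Omega|\nabla (u+\alpha)^{\frac{p+m_1-1}{2}}|^2$ — plus a cross term from the chemotactic flux of the form $\chi\int_\Omega(p-1)u(u+\alpha)^{p+m_2-4}\nabla u\cdot\nabla v$, which after integration by parts turns into an integral involving $(u+\alpha)^{p+m_2-2}\Delta v$ or $(u+\alpha)^{p+m_2-2}|\nabla v|$-type quantities. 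For $\frac{1}{q}\frac{d}{dt}\int_\Omega|\nabla v|^{2q}$, I use $\nabla v_t=\nabla\Delta v-\nabla v+\nabla u$, multiply by $|\nabla v|^{2q-2}\nabla v$, integrate by parts (here the boundary term $\int_{\partial\Omega}|\nabla v|^{2q-2}\frac{\partial|\nabla v|^2}{\partial\nu}$ appears — on a non-convex domain this is not automatically $\le 0$, so I must invoke the boundary-integral inequality mentioned in the abstract, bounding it by an interior $W^{1,2}$-norm of $|\nabla v|^q$ times a small constant plus lower order, à la the trace/pointwise estimates used by Ishida--Seki--Yokota); this produces a negative term $-c\int_\Omega|\nabla|\nabla v|^q|^2$ plus a term $\int_\Omega|\nabla v|^{2q-2}\nabla v\cdot\nabla u$ which I integrate by parts once more to move the derivative off $u$.

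The heart of the argument is then to absorb all the resulting "bad" terms — which are products of powers of $(u+\alpha)$ and powers of $|\nabla v|$ (and their gradients) — into the two negative gradient terms plus powers of $\Phi$ itself, using the Gagliardo--Nirenberg inequality. Concretely, terms like $\int_\Omega(u+\alpha)^{a}|\nabla v|^{b}$ are split by Hölder and then each factor $\|(u+\alpha)^{\frac{p+m_1-1}{2}}\|$ and $\|\,|\nabla v|^q\,\|$ is estimated in the appropriate $L^s$-norm by GN-interpolation between $\|\nabla(\cdot)\|_{L^2}$ and a lower Lebesgue norm controlled by $\Phi$; the exponent $r=\frac{p}{p+m_1-1}$ records exactly the scaling mismatch coming from $m_1\ne 1$ in the diffusion power, and $f(\eta,s)$ with $\eta=\frac{n}{n-1}$ is the GN exponent produced by this interpolation in dimension $n$. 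Conditions \eqref{C1}--\eqref{C2} are precisely what is needed to make every GN interpolation exponent admissible and to keep the power of the gradient norm strictly below $2$ so that Young's inequality lets us absorb it against the negative terms, leaving behind only $A\Phi^{f(\eta,r)}+B\Phi^{f(\eta,1)}+C\Phi^{\eta}+D$. The main obstacle, and the step requiring genuine care, is this bookkeeping of exponents: ensuring simultaneously that (i) the non-convex boundary term is controllable, (ii) every GN application is in the valid range, and (iii) after Young's inequality the three surviving powers of $\Phi$ are exactly $f(\eta,r)$, $f(\eta,1)$, $\eta$ with the stated dependence of $A$ and $C$ on $m_1$ — everything else is routine integration by parts and Hölder.
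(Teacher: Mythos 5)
Your proposal follows essentially the same route as the paper: the same energy functional $\Phi$, the same two differential estimates (with the diffusion term rewritten as $\int_{\Omega}|\nabla(u+\alpha)^{\frac{p+m_1-1}{2}}|^2$ and the non-convex boundary term handled by the Li--Lankeit-type trace inequality), Gagliardo--Nirenberg interpolation plus Young's inequality to absorb the bad terms under \eqref{C1}--\eqref{C2}, integration of the resulting ODE, and Freitag's extensibility criterion to identify the $\Phi$-blow-up time with the classical one. The sketch is correct in all its essentials, leaving only the exponent bookkeeping that you yourself flag as the delicate part.
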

\begin{remark}
	Theorem \ref{main} covers the case that 
	$\Omega$ is a 
	general non-convex bounded domain and 
$m_1 = 1$ 
(see \cite[Theorem 1.1]
	{Anderson-Deng_2017}). 
	Moreover, the constants
	$A,B,C,D$ in \eqref{blow-up-time} 
are better than the previous. 
	If $\Omega$ is a 
convex bounded domain, then 
	we can take $D=0$ 
	(see Corollary \ref{vremark} below). 
\end{remark}
\begin{remark}
The estimate \eqref{blow-up-time} implies 
that the larger $m_1$ is, the larger 
the blow-up time $t^{\ast}$ 
for solutions $(u,v)$ of \eqref{KS} is. 
Indeed, we shall 
consider the effect of $m_1$. 
When we fix $p,q>1$, $f(\eta,r(m_1))$ is 
decreasing in $m_1$. 
This entails that if $m_1$ is 
	sufficiently large, then 
	$t^{\ast}$ will be large. 
In other words, 
if the power of diffusion is strong, 
then a solution of \eqref{KS} 
blows up with a delay. 
	On the other hand, we automatically see 
	that if the power 
	of diffusion is weak, then 
	a solution of \eqref{KS} 
	blows up early.
\end{remark}
\begin{remark}
As to the assumption of Theorem \ref{main}, 
we need not suppose that 
a solution of \eqref{KS} blows up 
in finite time in the proof 
of Theorem \ref{main}. Namely, 
this means that 
we can essentially estimate 
the ``life span" for solutions 
of \eqref{KS}. Since we are interested in 
the blow-up time  for solutions of \eqref{KS}, 
the assumption concerning 
the blow-up
is added 
in Theorem \ref{main}.  
%
\end{remark}
By computing the integral appearing 
in \eqref{blow-up-time}, 
we can establish 
a lower bound for $t^{\ast}$ 
in the simple form as follows: 
\begin{corollary}
Under the assumption of 
Theorem \ref{main}, 
if\/ $\Phi(0)<1$, then \eqref{blow-up-time} 
is 
rewritten as follows\/{\rm :} 
	\begin{align*}
		& s \geq 1 
\ \, \Longrightarrow 
\ \, t^{\ast} 
		\geq \frac{1}
		{f(\eta,r)-1}\cdot 
\frac{\Phi(0)}{A{\Phi(0)}^{f(\eta,r)-1}+
		B{\Phi(0)}^{f(\eta,1)-1}
+C{\Phi(0)}^{\eta-1}
		+D},
		\\[1mm]
		& s < 1 
\ \, \Longrightarrow 
\ \, t^{\ast} 
		\geq \frac{1}
		{f(\eta,1)-1}
\cdot \frac {\Phi(0)}{
		B{\Phi(0)}^{f(\eta,1)-1}+
		A{\Phi(0)}^{f(\eta,r)-1}
+C{\Phi(0)}^{\eta-1}
		+D}.
	\end{align*}
\end{corollary}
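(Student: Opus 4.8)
The starting point is the integral bound \eqref{blow-up-time} furnished by Theorem \ref{main}, namely $t^{\ast}\ge\int_{\Phi(0)}^{\infty}d\tau/g(\tau)$ with $g(\tau):=A\tau^{f(\eta,r)}+B\tau^{f(\eta,1)}+C\tau^{\eta}+D$; writing $\phi_0:=\Phi(0)$, the task is purely to extract an explicit lower bound for this integral when $\phi_0<1$, and to recognise which exponent governs the estimate. The plan is: (a) order the three exponents $f(\eta,r),f(\eta,1),\eta$ and single out the largest, call it $M$; (b) bound $g$ from above by one power of $\tau$ anchored at $\phi_0$; (c) integrate the resulting pure power and simplify, invoking $\phi_0<1$ to pass to the stated denominator.

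For step (a) I would read off from \eqref{def of f} that $s\mapsto f(\eta,s)$ is increasing, since its numerator $\eta-1$ is a fixed positive constant while the denominator $n(\tfrac1n-\tfrac\eta2+\tfrac1{2s})$ is positive and strictly decreasing in $s$ on the admissible range. Hence $f(\eta,r)\ge f(\eta,1)$ precisely when $s\ge1$ (here $s$ is the value $r$ of \eqref{def of r}, controlled by $m_1$ through $1/r=1+(m_1-1)/p$). I would also verify the elementary inequality $f(\eta,1)>\eta$: evaluating \eqref{def of f} at $s=1$ gives denominator $1-\tfrac n2(\eta-1)$, which lies in $(0,1)$ for $\eta$ as in \eqref{df of eta}, so the fraction exceeds $\eta-1$ and $f(\eta,1)>\eta$. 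Consequently $M=f(\eta,r)$ when $s\ge1$ and $M=f(\eta,1)$ when $s<1$, and in either case $M>1$ (as $f>1$), which is exactly the prefactor $1/(M-1)$ appearing in the two displayed inequalities.

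For steps (b)--(c) I would show that for every $\tau\ge\phi_0$ one has $g(\tau)\le(\tau/\phi_0)^M g(\phi_0)$. This rests on the termwise estimate $\tau^{e-M}\le\phi_0^{e-M}$, valid because $e\le M$ forces the exponent $e-M\le0$ so that $x\mapsto x^{e-M}$ is nonincreasing, applied to each $e\in\{f(\eta,r),f(\eta,1),\eta\}$ and, for the constant term, to $e=0$ via $(\tau/\phi_0)^M\ge1$; nonnegativity of $A,B,C,D$ lets me sum the terms. Inverting and integrating, since $M>1$,
\begin{align*}
\int_{\phi_0}^{\infty}\frac{d\tau}{g(\tau)}\ge\frac{\phi_0^{M}}{g(\phi_0)}\int_{\phi_0}^{\infty}\tau^{-M}\,d\tau=\frac{\phi_0^{M}}{g(\phi_0)}\cdot\frac{\phi_0^{1-M}}{M-1}=\frac{1}{M-1}\cdot\frac{\phi_0}{g(\phi_0)}.
\end{align*}
Finally, because $\phi_0<1$ and each power exponent exceeds $1$, I weaken $g(\phi_0)=A\phi_0^{f(\eta,r)}+B\phi_0^{f(\eta,1)}+C\phi_0^{\eta}+D\le A\phi_0^{f(\eta,r)-1}+B\phi_0^{f(\eta,1)-1}+C\phi_0^{\eta-1}+D$, using $\phi_0^{e}\le\phi_0^{e-1}$ for $e\ge1$ and leaving $D$ unchanged; substituting $M=f(\eta,r)$ (case $s\ge1$) or $M=f(\eta,1)$ (case $s<1$) produces exactly the two inequalities of the corollary.

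The main obstacle is step (a): correctly ordering the exponents so that the power actually factored out is the largest, which guarantees both the convergence $M>1$ and the precise prefactor $1/(f(\eta,r)-1)$ versus $1/(f(\eta,1)-1)$; this hinges on the monotonicity of $f(\eta,\cdot)$ together with $f(\eta,1)>\eta$. Once the dominant exponent is pinned down, the remainder is a routine monotone-power comparison and an elementary $\tau^{-M}$ integral. Note that the hypothesis $\phi_0<1$ enters only in the closing weakening, and is precisely what makes the stated form a valid lower bound.
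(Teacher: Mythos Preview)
The paper states this corollary without proof, indicating only that it follows ``by computing the integral appearing in \eqref{blow-up-time}''. Your argument supplies precisely that computation and is correct: you identify the dominant exponent $M$ via the monotonicity of $s\mapsto f(\eta,s)$ together with $f(\eta,1)>\eta$, use the termwise comparison $g(\tau)\le(\tau/\phi_0)^M g(\phi_0)$ for $\tau\ge\phi_0$ to reduce to an elementary $\tau^{-M}$ integral, and then invoke $\phi_0<1$ to pass from $g(\phi_0)$ to the displayed denominator (this last step enlarges the denominator, hence yields a weaker but still valid lower bound, as you note). This is the natural route and fills the gap left in the paper.
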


\medskip
The strategy for the proof of Theorem 
\ref{main} is to 
derive an ordinary differential inequality 
for $\Phi(t)$ defined as \eqref{def of phi}. 
%
%
%
%
%
We first 
construct 
the inequality 
	\begin{align}\nonumber
		&\frac{d\Phi}{dt}
		+\frac{p-1}{2}{\left(\frac{2}{p+m_1-1}
		\right)}^2\int_{\Omega}\left|\nabla 
		(u+\alpha)^
		{\frac{p+m_1-1}{2}}\right|^2
		+\left(\frac{2(q-1)}{q^2}-\delta 
		\right)\int_{\Omega}
\left|\nabla |\nabla v|^q\right|^2
		\\\nonumber
		&\leq
		\frac{{\chi}^2(p-1)}{2}\int_{\Omega} 
		(u+\alpha)^
		{p+2m_2-m_1-3}|\nabla v|^2
		+\frac{4(q-1)+n}{2}
		\int_{\Omega} (u+\alpha)^2 
		|\nabla v|^
		{2q-2}
		+D_{\delta}
	\end{align} 
for some 
$\delta \in (0,\frac{2(q-1)}{q^2})$ and 
$D_\delta>0$. We next estimate 
the first and second terms 
on the right-hand side 
%
by using Young's inequality and 
H\"older's inequality 
to make 
$\int_{\Omega} 
\left(u+\alpha\right)^{p\eta}$. 
%
%
In \cite{Anderson-Deng_2017} dealing 
with the case that $m_1=1$, 
by applying 
the Sobolev embedding 
$W^{1,1}(\Omega) \hookrightarrow 
L^\eta(\Omega)$, 
the quantity 
$\int_{\Omega} 
\left(u+\alpha\right)^{p\eta}$ 
is estimated 
as 
\begin{align*}
\int_{\Omega} 
\left(u+\alpha\right)^{p\eta}
\leq
C 
\left(
\int_{\Omega}(u+\alpha)^{p}
+
\int_{\Omega}
|\nabla(u+\alpha)^{p}|
\right)^\eta
\end{align*}
with some $C>0$, 
and hence we need an 
additional deformation to obtain 
$\int_{\Omega} | \nabla (u+\alpha)^
\frac{p}{2} |^2$, 
because of the difference 
from 
$\int_{\Omega} |\nabla(u+\alpha)^{p}|^2$. 
Our technical innovation 
in this paper 
is to apply 
the Gagliardo--Nirenberg inequality 
instead of using the Sobolev embedding 
as 
\begin{align*}
&\int_{\Omega} 
\left(u+\alpha\right)^{p\eta}
\leq c 
\left\|\nabla (u+\alpha)^
{\frac{p}{2}}
\right\|_
{L^2(\Omega)}^
{\eta}
\left\| (u+\alpha)^
{\frac{p}{2}}
\right\|
_{L^
{2}
(\Omega)}^
{\eta}
+\widetilde{c}
\left\| (u+
\alpha)^
{\frac{p}{2}}
\right\|
_{L^
{2}
(\Omega)}^
{2\eta} 
\end{align*}
for some $c,\widetilde{c}>0$ 
in the case $m_1 = 1$ 
(we extend 
this inequality 
to the case $m_1 \neq 1$). 
We thus obtain 
the factor 
$\int_{\Omega}| \nabla (u+\alpha)^
\frac{p}{2} |^2$ 
directly and 
a sharp lower bound 
for the blow-up time can be 
established. 
In addition, 
the key to removing 
the convexity of $\Omega$ 
is the estimate for 
$\int_{\partial\Omega}|\nabla v|
		^{2q-2}\nabla\left(|\nabla v|^2\right)
		\cdot\nu$ 
which is estimated by $0$ 
in the previous works 
\cite{Anderson-Deng_2017} 
and 
\cite{Tao-Piro_2016}. 
In this paper 
it is estimated by the 
combination of the embedding 
$W^{\beta+\frac{1}{2}}(\Omega) 
\hookrightarrow 
L^2(\partial \Omega)\ 
(\beta \in (0,\frac{1}{2}))
$ 
with the fractional 
Gagliardo--Nirenberg inequality. 

\indent
This paper is organized as follows.
In Section \ref{Section 2} 
we will collect lemmas which 
will be used in this paper. 
In Section \ref{Section 3} we will 
present an estimate for 
the first term of $\Phi(t)$ 
defined in Theorem \ref{main}. 
In Section \ref{Section 4} 
we will give 
an estimate for 
the second term of $\Phi(t)$. 
We will complete 
the proof of Theorem \ref{main} 
in 
Section \ref{Section 5} 
through a series of four steps. 
An important thing is 
to obtain  
an ordinary differential inequality 
of $\Phi(t)$ without wasting 
effect of $m_1$.  
\section{Preliminaries}\label{Section 2}
In this section 
we recall some known 
basic results. 
%
Let us begin with 
the well-known 
Gagliardo--Nirenberg inequality 
(for details, see e.g.,  
Li--Lankeit 
\cite[Lemma 2.3]{Li-Lankeit_2016}): 
\begin{lem}\label{G-N}
Suppose that $\Omega$ 
is a bounded domain in\/ $\Rn$ with 
smooth boundary. 
	Let 
	$r\geq1$, $0<q \leq p\leq\infty$, $s>0$ 
	be such that 
	$\frac{1}{r}\leq \frac{1}{n}+\frac{1}{p}$.
	Then there exists 
	$c>0$ such that 
	\begin{align*}
		\|w\|_{L^p(\Omega)}\leq c
		\left(\|\nabla w\|_{L^r(\Omega)}
		^{a}\|w\|_{L^q(\Omega)}^{1-a}
	+\|w\|_{L^s(\Omega)}\right)
	\end{align*}
for all $w\in W^{1,p}(\Omega)\cap 
	L^q(\Omega)$,
	where 
$a:=\frac{\frac{1}{q}-\frac{1}{p}}
		{\frac{1}{q}+\frac{1}{n}-
		\frac{1}{r}}$. 	
\end{lem}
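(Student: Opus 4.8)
The plan is to deduce the inequality from the classical homogeneous Gagliardo--Nirenberg interpolation inequality on all of $\Rn$ by means of a Sobolev extension operator, the additive $\|w\|_{L^s(\Omega)}$ term being the correction forced by passing from $\Rn$ to a bounded domain. First I would check that the exponents are admissible. A direct computation shows that the prescribed $a$ is exactly the value dictated by dimensional balance, $\frac1p = a(\frac1r-\frac1n)+(1-a)\frac1q$, so that the interpolation is scale-invariant. Moreover $0<q\le p$ gives $\frac1q-\frac1p\ge0$, and one checks that the denominator $\frac1q+\frac1n-\frac1r$ is positive (the case $p=q$, where it may vanish, being trivial with $a=0$); the hypothesis $\frac1r\le\frac1n+\frac1p$ is then equivalent to $\frac1q-\frac1p\le\frac1q+\frac1n-\frac1r$, i.e. to $a\le1$. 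Thus $a\in[0,1]$, and the only genuinely degenerate configuration is $r\ge n$ with $a=1$ (e.g. $r=n$, $p=\infty$), which would force the strict form $\frac1r<\frac1n+\frac1p$ or a separate Morrey-type treatment and does not arise in the applications below.

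Next I would pass to $\Rn$. Because $\Omega$ is bounded with smooth boundary, there is a bounded linear extension operator $E$ that is simultaneously bounded from $W^{1,\rho}(\Omega)$ into $W^{1,\rho}(\Rn)$ and from $L^\rho(\Omega)$ into $L^\rho(\Rn)$ for the relevant exponents $\rho$ (a Stein-type operator); after multiplying by a fixed cutoff I may assume $Ew$ is compactly supported. Applying the homogeneous whole-space inequality $\|Ew\|_{L^p(\Rn)}\le c\,\|\nabla Ew\|_{L^r(\Rn)}^{a}\|Ew\|_{L^q(\Rn)}^{1-a}$ to $Ew$, then restricting to $\Omega$ and invoking the boundedness of $E$, yields $\|w\|_{L^p(\Omega)}\le c\big(\|\nabla w\|_{L^r(\Omega)}+\|w\|_{L^r(\Omega)}\big)^{a}\|w\|_{L^q(\Omega)}^{1-a}$, the lower-order term $\|w\|_{L^r(\Omega)}$ appearing because the extension controls $\nabla Ew$ only through the full $W^{1,r}(\Omega)$-norm.

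To convert this into the asserted form I would isolate the low-frequency part by splitting $w=(w-\bar w)+\bar w$, where $\bar w:=|\Omega|^{-1}\int_\Omega w$. Applied to the mean-zero function $w-\bar w$, the Poincar\'e--Wirtinger inequality $\|w-\bar w\|_{L^r(\Omega)}\le c\,\|\nabla w\|_{L^r(\Omega)}$ collapses the two terms inside the bracket into $\|\nabla w\|_{L^r(\Omega)}$ alone, producing the desired product $c\,\|\nabla w\|_{L^r(\Omega)}^{a}\|w\|_{L^q(\Omega)}^{1-a}$ (after bounding $\|w-\bar w\|_{L^q}$ by $\|w\|_{L^q}$ plus a multiple of the constant part). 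The constant part contributes $\|\bar w\|_{L^p(\Omega)}=|\bar w|\,|\Omega|^{1/p}$, and since on a finite-measure domain the average is controlled by a norm of $w$, this is exactly where the freedom in $s$ enters and where the additive correction $c\,\|w\|_{L^s(\Omega)}$ is born.

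I expect the main obstacle to be precisely this last absorption: turning the lower-order contributions into a single arbitrary $L^s$-norm, uniformly over all admissible exponent ranges—in particular when $r>p$, so that $\|w\|_{L^r(\Omega)}$ is a stronger norm than $\|w\|_{L^p(\Omega)}$ and cannot simply be absorbed into the left-hand side—and for the full range $s>0$, including $s<1$ where $\|\cdot\|_{L^s}$ is no longer a norm. This is handled by combining the finiteness of $|\Omega|$ with an interpolation/absorption step (or, more robustly, an Ehrling-type compactness argument), and is the only place where the hypotheses are used beyond scale-invariance. The secondary technical points are the borderline exponents $p=\infty$ (treated by Morrey's inequality once $r>n$) and $r=n$, where the pure interpolation degenerates and a limiting argument is required.
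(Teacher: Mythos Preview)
The paper does not prove this lemma at all: it is stated as a known preliminary and simply referenced to Li--Lankeit \cite[Lemma~2.3]{Li-Lankeit_2016}. So there is no ``paper's own proof'' to compare against; your proposal goes well beyond what the paper does by actually sketching an argument.

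That said, your outline is a standard and essentially correct route to the bounded-domain Gagliardo--Nirenberg inequality: verify $a\in[0,1]$ from the exponent hypotheses, extend to $\Rn$ via a Stein operator, apply the homogeneous whole-space inequality, and then use Poincar\'e--Wirtinger to trade the lower-order $\|w\|_{L^r(\Omega)}$ contribution for control of the mean, which becomes the additive $\|w\|_{L^s(\Omega)}$ term. You correctly flag the genuine technical corners (the borderline $a=1$ with $r\ge n$, the case $s<1$, and $p=\infty$), and none of these arise in the paper's applications, where the lemma is invoked only with $p=2r\eta$, $r=2$, $q=s=2r$ finite and $\eta<2$. For the purposes of this paper, citing the result as the authors do is entirely adequate; your sketch would be appropriate if a self-contained proof were required.
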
 
Next we give an 
estimate for a particular boundary 
integral which 
enables us to 
cover possibly non-convex bounded 
domains 
(see \cite[Lemma 2.1]{Li-Lankeit_2016}). 
\begin{lem}\label{non-convex}
	Let $\Omega$ be 
a bounded domain in\/ $\mathbb{R}^{n}$ 
with smooth boundary. 
	Suppose that $q\in[1,\infty)$. 
	Then for all 
$\delta>0$ there exists 
	$C_{\delta}>0$ independent of $q$ such that 
for all 
$w\in C^2(\overline{\Omega})$ satisfying 
$\frac{\partial w}{\partial \nu}=0 
\ {\rm on}\ \partial\Omega$, 
	\begin{align*}
		\int_{\partial\Omega} |\nabla w|
		^{2q-2}\frac{\partial|\nabla w|^2}
		{\partial \nu}\leq \delta 
		\int_{\Omega} 
\left|\nabla|\nabla w|^q\right|
		^2
+C_{\delta}. 
	\end{align*}
	\end{lem}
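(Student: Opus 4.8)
The plan is to reduce the boundary integral to a boundary $L^2$-norm of $g:=|\nabla w|^q$ via a pointwise geometric bound, and then to pull that boundary norm back into the interior through the trace embedding combined with the fractional Gagliardo--Nirenberg inequality. First, since $\partial w/\partial\nu=0$ on $\partial\Omega$, the field $\nabla w$ is tangential along $\partial\Omega$, and the elementary identity $\partial_\nu|\nabla w|^2=2\,(D^2w\,\nabla w)\cdot\nu$, combined with tangential differentiation of the Neumann condition $\nabla w\cdot\nu=0$, yields on $\partial\Omega$
\[
\frac{\partial|\nabla w|^2}{\partial\nu}\le 2\kappa\,|\nabla w|^2 ,
\]
where $\kappa\ge0$ is a constant controlling the principal curvatures of $\partial\Omega$ (so $\kappa=0$ recovers the convex case used in \cite{Anderson-Deng_2017,Tao-Piro_2016}, where this integral is simply $\le0$). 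Multiplying by $|\nabla w|^{2q-2}$ and integrating over $\partial\Omega$ gives
\[
\int_{\partial\Omega}|\nabla w|^{2q-2}\frac{\partial|\nabla w|^2}{\partial\nu}\le 2\kappa\int_{\partial\Omega}|\nabla w|^{2q}=2\kappa\,\|g\|_{L^2(\partial\Omega)}^2 ,
\]
and here $\kappa$ depends only on $\Omega$, in particular not on $q$.

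Next I would estimate $\|g\|_{L^2(\partial\Omega)}$. Fixing $\beta\in(0,\tfrac12)$ and setting $s:=\beta+\tfrac12\in(\tfrac12,1)$, the trace embedding $W^{s,2}(\Omega)\hookrightarrow L^2(\partial\Omega)$ gives $\|g\|_{L^2(\partial\Omega)}\le c_1\|g\|_{W^{s,2}(\Omega)}$, while the fractional Gagliardo--Nirenberg inequality interpolating $W^{s,2}$ between $W^{1,2}$ and $L^2$ with exponent $s$ gives
\[
\|g\|_{W^{s,2}(\Omega)}\le c_2\Big(\|\nabla g\|_{L^2(\Omega)}^{s}\,\|g\|_{L^2(\Omega)}^{1-s}+\|g\|_{L^2(\Omega)}\Big).
\]
Since $s<1$, Young's inequality with exponents $1/s$ and $1/(1-s)$ splits the interpolated product into an arbitrarily small multiple of $\|\nabla g\|_{L^2(\Omega)}^2=\int_\Omega\big|\nabla|\nabla w|^q\big|^2$ plus a lower-order contribution collected into the constant $C_\delta$; choosing that small multiple equal to $\delta/(2\kappa)$ and combining with the previous display yields exactly
\[
\int_{\partial\Omega}|\nabla w|^{2q-2}\frac{\partial|\nabla w|^2}{\partial\nu}\le\delta\int_\Omega\big|\nabla|\nabla w|^q\big|^2+C_\delta .
\]
The constants $c_1,c_2$ depend only on $\Omega,n,s$ and Young's constant depends only on $\delta$ and $s$; as none of these involves $q$, the resulting $C_\delta$ is independent of $q$, which is the assertion of the lemma.

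Two points require care, and the second is where I expect the real work to lie. First, $g=|\nabla w|^q=(|\nabla w|^2)^{q/2}$ need not be $C^1$ at zeros of $\nabla w$ when $1\le q<2$; I would therefore run the whole argument for the regularized field $g_\varepsilon:=(|\nabla w|^2+\varepsilon)^{q/2}\in C^1(\overline\Omega)$, obtaining the inequality with constants independent of $q$ and $\varepsilon$, and then pass to the limit $\varepsilon\to0$ by dominated convergence, using $w\in C^2(\overline\Omega)$. Second, and the main obstacle, is to carry out this boundary-to-interior reduction with a constant that is genuinely uniform in $q$: the geometric step contributes only the $q$-independent factor $\kappa$, and the delicate point is that the trace and fractional Gagliardo--Nirenberg constants do not deteriorate with $q$, which holds precisely because the interpolation is performed on the \emph{single} function $g$ at the \emph{fixed} fractional order $s$, rather than on $w$ itself with a $q$-dependent exponent. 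Establishing the pointwise curvature inequality on $\partial\Omega$ by bounding the second fundamental form with one constant $\kappa$ is the geometric ingredient that removes the convexity hypothesis of the earlier works.
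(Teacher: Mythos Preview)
Your route is exactly the one the paper indicates: the lemma is not proved there but quoted from \cite[Lemma~2.1]{Li-Lankeit_2016}, and the introduction names precisely the two ingredients you use, namely the trace embedding $W^{\beta+\frac12,2}(\Omega)\hookrightarrow L^2(\partial\Omega)$ for $\beta\in(0,\tfrac12)$ and the fractional Gagliardo--Nirenberg inequality, applied to $g=|\nabla w|^q$ after the pointwise curvature bound $\partial_\nu|\nabla w|^2\le 2\kappa|\nabla w|^2$ on $\partial\Omega$. Your explanation of why the constants do not depend on $q$ (the interpolation is performed on the single function $g$ at a fixed fractional order) is also the right one.

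The gap is at the absorption step. After Young's inequality the residual is not a number but $C_\epsilon\|g\|_{L^2(\Omega)}^2=C_\epsilon\int_\Omega|\nabla w|^{2q}$, which depends on $w$; you cannot ``collect it into the constant $C_\delta$'' as written. A scaling $w\mapsto\lambda w$ makes this visible: both the boundary integral and the gradient term on the right scale like $\lambda^{2q}$, so on a non-convex domain no $w$-independent additive constant can compensate once $\delta$ is small. What your argument actually delivers is
\[
\int_{\partial\Omega}|\nabla w|^{2q-2}\,\frac{\partial|\nabla w|^2}{\partial\nu}\ \le\ \delta\int_\Omega\bigl|\nabla|\nabla w|^q\bigr|^2\ +\ C_\delta\int_\Omega|\nabla w|^{2q},
\]
with $C_\delta=C_\delta(\Omega,n,\delta)$ independent of $q$ and $w$. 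This is in fact the version that feeds correctly into Lemma~\ref{vlemma}: the extra term is controlled either by the favourable $+2\int_\Omega|\nabla v|^{2q}$ appearing on the left-hand side there, or, more robustly, it passes through as a linear-in-$\Phi$ contribution in the final differential inequality for $\Phi$. The fix is simply to keep $C_\delta\|g\|_{L^2(\Omega)}^2$ explicit rather than declaring it a constant.
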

If 
$\Omega$ is a 
convex bounded 
domain, then the following holds 
(see \cite[Lemma 3.2]
{Tao-Winkler_2012}):
\begin{lem}\label{convex}
Assume that $\Omega$ is a convex bounded 
domain, 
	and that $w\in C^2(
	\overline{\Omega})$ 
	satisfies $\frac{\partial w}{\partial
	\nu}=0\ {\rm on}\ 
	\partial\Omega$. Then
	\begin{align*}
		\frac{\partial |\nabla w|^2}{\partial
		\nu}\leq 0 \quad 
		{\rm on}\ \partial\Omega.
	\end{align*}
\end{lem}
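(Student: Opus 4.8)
The plan is to reduce the pointwise inequality on $\partial\Omega$ to the positive semidefiniteness of the second fundamental form of $\partial\Omega$, which is precisely the differential–geometric meaning of convexity (with respect to the outward normal). First I would write the normal derivative explicitly. Since $|\nabla w|^2 = \sum_k (\pa_{x_k}w)^2$ and $w \in C^2(\overline\Omega)$, a direct computation gives $\nabla |\nabla w|^2 = 2\,(D^2 w)\,\nabla w$, where $D^2 w$ is the (symmetric) Hessian of $w$. Hence on $\partial\Omega$,
\[
\frac{\pa |\nabla w|^2}{\pa \nu}
= \nabla |\nabla w|^2 \cdot \nu
= 2\,\nu^{\top}(D^2 w)\,\nabla w,
\]
so the assertion is equivalent to $\nu^{\top}(D^2 w)\,\nabla w \le 0$ on $\partial\Omega$.

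Next I would exploit the boundary condition. The identity $\nabla w \cdot \nu = \pa w/\pa\nu = 0$ holds at every point of $\partial\Omega$, which says exactly that $\nabla w$ is tangent to $\partial\Omega$ there. The crux is then to differentiate the boundary identity $\nabla w \cdot \nu \equiv 0$ in a tangential direction. Extending $\nu$ to a tubular neighborhood of $\partial\Omega$ as the unit outward normal field (which is $C^1$ since $\partial\Omega$ is smooth) and applying a tangent vector field $X$ to the boundary function $\nabla w \cdot \nu$, the chain rule yields
\[
0 = X\big(\nabla w\cdot\nu\big)
= \langle (D^2 w)X,\ \nu\rangle + \langle \nabla w,\ (D\nu)X\rangle,
\]
where $D\nu$ is the Jacobian of the extended normal field; this derivative vanishes because $\nabla w \cdot \nu$ is identically zero along $\partial\Omega$. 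Choosing the admissible tangential vector $X = \nabla w$ and rearranging gives
\[
\nu^{\top}(D^2 w)\,\nabla w = -\,\langle \nabla w,\ (D\nu)\,\nabla w\rangle .
\]

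Finally I would invoke convexity. The restriction of $D\nu$ to the tangent space of $\partial\Omega$ is the Weingarten (shape) operator, so $\langle \nabla w, (D\nu)\nabla w\rangle = \mathrm{II}(\nabla w,\nabla w)$, the second fundamental form evaluated on the tangent vector $\nabla w$. With the outward-normal convention, $\Omega$ convex is equivalent to $\mathrm{II}$ being positive semidefinite on tangent vectors (for the unit ball $\nu(x)=x$, so $D\nu = I$ and $\mathrm{II}>0$), whence $\mathrm{II}(\nabla w,\nabla w)\ge 0$. Combining,
\[
\frac{\pa |\nabla w|^2}{\pa \nu}
= 2\,\nu^{\top}(D^2 w)\,\nabla w
= -2\,\mathrm{II}(\nabla w,\nabla w) \le 0,
\]
which is the claim.

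I expect the main obstacle to be the third step: justifying the tangential differentiation cleanly and identifying $\langle \nabla w,(D\nu)\nabla w\rangle$ with the second fundamental form carrying the correct sign. To make this fully rigorous one can either argue intrinsically with the shape operator, or fix a boundary point, choose local coordinates in which $\partial\Omega$ is the graph $x_n = \varphi(x')$ with $\nabla'\varphi = 0$ at that point, and note that there $D\nu$ reduces to $D^2\varphi$, whose nonnegativity is the convexity of $\varphi$. The only genuine care is the consistent orientation bookkeeping, so that convexity delivers $\mathrm{II}\ge 0$ (and hence the final minus sign) rather than the reverse inequality.
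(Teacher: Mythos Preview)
Your argument is correct and is precisely the standard proof of this classical fact. Note, however, that the paper does not actually prove this lemma: it merely quotes the statement and refers the reader to \cite[Lemma~3.2]{Tao-Winkler_2012}. What you have written is essentially the argument one finds in that reference (or in the earlier literature it draws on): reduce $\partial_\nu|\nabla w|^2$ to $2\,\nu^\top(D^2 w)\nabla w$, differentiate the boundary identity $\nabla w\cdot\nu\equiv 0$ tangentially to convert this into $-2\,\mathrm{II}(\nabla w,\nabla w)$, and conclude via the nonnegativity of the second fundamental form for convex domains. Your caveat about sign conventions is well placed but your bookkeeping is consistent, so there is no gap.
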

%
%
%
%
%
%
We finally introduce 
the fundamental fact 
for classical solutions of \eqref{KS} and 
results  
for the blow-up time. 
We recall the result 
for local existence of 
classical solutions 
(see \cite[Lemma 1.1]{Tao-Winkler_2012}). 
\begin{lem}\label{or}
Let $u_0 \in C(\overline{\Omega})$ and 
$v_0 \in C^1(\overline{\Omega})$. 
Then there 
exist  
$T_{\rm max} \in (0,\infty]$ 
and a uniquely determined 
pair $(u,v)$ of nonnegative functions 
in $C(\overline{\Omega}\times [0,T_{\rm max}))
\cap C^{2,1}(\overline{\Omega} \times 
(0,T_{\rm max}))$ solving $\eqref{KS}$ 
classically in $\Omega \times (0,T_{\rm max})$. 
Additionally we either have 
\begin{align*}
T_{\rm max} = \infty \quad \text{or}%
\quad \limsup_{t 
\nearrow
T_{\rm max}}\,  
\left(\|u(\cdot,t)\|_{L^{\infty}(\Omega)} 
+
\|v(\cdot,t)\|_{L^{\infty}(\Omega)}
\right) 
=\infty. 
\end{align*}
\end{lem}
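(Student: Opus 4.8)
The plan is to recast \eqref{KS} as a single quasilinear parabolic system for the vector unknown $w=(u,v)^{T}$ and to invoke the general local theory of such systems. Writing the problem in divergence form $w_t=\nabla\cdot(a(w)\nabla w)+g(w)$ under the Neumann condition $\pa w/\pa\nu=0$, the diffusion matrix and reaction term are
\[
a(w)=\begin{pmatrix}(u+\alpha)^{m_1-1} & -\chi u(u+\alpha)^{m_2-2}\\ 0 & 1\end{pmatrix},
\qquad g(w)=\begin{pmatrix}0\\ u-v\end{pmatrix}.
\]
The decisive structural observation is that $\alpha>0$ together with $u\ge0$ renders the diffusion \emph{nondegenerate}: the eigenvalues of $a(w)$ are $(u+\alpha)^{m_1-1}>0$ and $1$, so the spectrum lies in the open right half-plane and the system is normally parabolic. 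Since the entries of $a(\cdot)$ and $g(\cdot)$ are smooth functions of $w$ on the relevant range $\{u+\alpha>0\}$ and the initial data carry the regularity $u_0\in C(\ol{\Omega})$, $v_0\in C^1(\ol{\Omega})$, the local existence and uniqueness theory for quasilinear parabolic systems (in the form developed by Amann and applied in \cite{Tao-Winkler_2012}) applies and yields a maximal existence time $\tmax\in(0,\infty]$ together with a unique classical solution $(u,v)$ of the asserted regularity $C(\ol{\Omega}\times[0,\tmax))\cap C^{2,1}(\ol{\Omega}\times(0,\tmax))$.

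Next I would establish nonnegativity by the parabolic comparison principle, applied equation by equation. For $v$ one uses that, given $u\ge0$, the second equation is linear and uniformly parabolic with nonnegative source $u$ and nonnegative datum $v_0$, whence $v\ge0$. For $u$ one uses that the chemotactic flux $u(u+\alpha)^{m_2-2}\nabla v$ vanishes on $\{u=0\}$, so the constant $0$ is a lower barrier for the first equation; the comparison principle for the scalar equation satisfied by $u$ — uniformly parabolic thanks to $\alpha>0$ — then gives $u\ge0$ on $[0,\tmax)$.

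The remaining and most delicate task is to upgrade the abstract extensibility criterion, which in the quasilinear framework is phrased in a H\"older norm of $w$, to the stated $L^\infty$ dichotomy. The plan is to argue by contraposition: assuming $\tmax<\infty$ and $\limsup_{t\nearrow\tmax}(\|u(\cdot,t)\|_{L^\infty(\Omega)}+\|v(\cdot,t)\|_{L^\infty(\Omega)})<\infty$, I would bootstrap boundedness of all higher norms and contradict maximality. First, boundedness of $\|u(\cdot,t)\|_{L^\infty(\Omega)}$ feeds into the variation-of-constants representation $v(\cdot,t)=e^{t(\Delta-1)}v_0+\int_0^t e^{(t-s)(\Delta-1)}u(\cdot,s)\,ds$ for the Neumann heat semigroup and, through its gradient smoothing estimate, yields a uniform bound on $\|\nabla v(\cdot,t)\|_{L^\infty(\Omega)}$ up to $\tmax$. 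With $\|u\|_{L^\infty}$ bounded (so the diffusion coefficient is squeezed between two positive constants) and $\|\nabla v\|_{L^\infty}$ bounded, the first equation becomes a scalar uniformly parabolic equation with bounded coefficients; parabolic H\"older estimates then give a uniform $C^{\theta,\theta/2}$ bound for $u$, and Schauder theory upgrades this to a uniform $C^{2+\theta,1+\theta/2}$ bound on $\ol{\Omega}\times[\tau,\tmax)$, with the analogous bound for $v$. These bounds permit continuation of the solution past $\tmax$, the desired contradiction. I expect this last bootstrap to be the main obstacle, both because the constants must be kept uniform up to $\tmax$ and because it relies crucially on the nondegeneracy $\alpha>0$ to keep the first equation uniformly elliptic; the degenerate case $\alpha=0$ would require an entirely different, weak-solution treatment.
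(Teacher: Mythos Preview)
Your sketch is essentially correct and follows the standard route (Amann's quasilinear parabolic theory, comparison for nonnegativity, semigroup and Schauder bootstrap for the $L^\infty$ extensibility criterion). Note, however, that the paper does not prove this lemma at all: it is stated as a known result with a direct citation to \cite[Lemma 1.1]{Tao-Winkler_2012}, and no argument is given here. Your proposal thus goes well beyond what the paper itself supplies, and in fact reproduces the strategy of the cited reference. One minor remark: in your nonnegativity step the logical order should be reversed --- the comparison argument for $u$ (using that the chemotactic flux vanishes on $\{u=0\}$) does not require any sign information on $v$, so one first obtains $u\ge 0$ and then deduces $v\ge 0$ from the linear equation with nonnegative source.
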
 
The following lemma,  
which was proved in \cite[Lemma 4.1]
{Horstmann-Winkler_2005},  plays 
an important role in considering  
the blow-up time for solutions of 
\eqref{KS} defined in Definition 
\ref{tdf}. %
\begin{lem}\label{W}
Let 
$(u,v)$ be a classical solution of 
\eqref{KS}. 
Suppose that there 
exist 
$p \ge 1$ and $C>0$ 
such that  
\begin{align*}
\| u(\cdot,t )\|_{L^p(\Omega)} 
\le C \quad \text{for all} 
\ t \in (0,T). 
\end{align*}
Then 
\begin{align*}
\|v\|_{L^{\infty}\left(0,T;W^{1,q}(\Omega)
\right)}
<\infty
\end{align*}
for any $q \in [1,\frac{np}{(n-p)_{+}})$ and 
even $q=\infty$ if $p>n$. 
\end{lem}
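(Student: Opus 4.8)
The plan is to run the standard heat-semigroup (variation-of-constants) argument on the second equation of \eqref{KS}, which for fixed $u$ is a linear inhomogeneous heat equation under homogeneous Neumann boundary conditions. Writing $(e^{\sigma\Delta})_{\sigma\ge0}$ for the Neumann heat semigroup on $\Omega$, the function $v$ satisfies the Duhamel representation
\begin{align*}
v(\cdot,t) = e^{t(\Delta-1)}v_0 + \int_0^t e^{(t-s)(\Delta-1)}u(\cdot,s)\,ds, \qquad t\in(0,T).
\end{align*}
Applying $\nabla$ and taking the $L^q(\Omega)$-norm, I would estimate the two contributions separately and uniformly in $t\in(0,T)$.

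For the first term I would use that $(e^{\sigma\Delta})_{\sigma\ge0}$ maps $W^{1,q}(\Omega)$ boundedly into itself and that $v_0\in C^1(\overline\Omega)\hookrightarrow W^{1,q}(\Omega)$ since $\Omega$ is bounded; together with the exponential factor coming from the $-v$ term this gives $\|\nabla e^{t(\Delta-1)}v_0\|_{L^q(\Omega)}\le\|e^{t(\Delta-1)}v_0\|_{W^{1,q}(\Omega)}\le c\,e^{-t}\|v_0\|_{W^{1,q}(\Omega)}$, which is bounded in $t$. For the Duhamel term I would invoke the $L^p$--$L^q$ smoothing estimate for the gradient of the Neumann heat semigroup: there exist $c_1>0$ and $\lambda>0$ with
\begin{align*}
\bigl\|\nabla e^{\sigma(\Delta-1)}w\bigr\|_{L^q(\Omega)}\le c_1\bigl(1+\sigma^{-\frac12-\frac n2(\frac1p-\frac1q)}\bigr)e^{-\lambda\sigma}\|w\|_{L^p(\Omega)}
\end{align*}
for all $\sigma>0$ and all $w\in L^p(\Omega)$, which is available because $p\le q$. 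Substituting $\sigma=t-s$, using the hypothesis $\|u(\cdot,s)\|_{L^p(\Omega)}\le C$ for all $s\in(0,T)$, and enlarging the range of integration, the Duhamel term is controlled by $c_1C\int_0^\infty(1+\sigma^{-\frac12-\frac n2(\frac1p-\frac1q)})e^{-\lambda\sigma}\,d\sigma$.

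The decisive point is the finiteness of this integral. The factor $e^{-\lambda\sigma}$ handles $\sigma\to\infty$, while near $\sigma=0$ the integrand is integrable exactly when $\frac12+\frac n2(\frac1p-\frac1q)<1$, i.e. $n(\frac1p-\frac1q)<1$, which is precisely the condition $q<\frac{np}{(n-p)_+}$; when $p>n$ one has $n/p<1$, so this holds for every $q\in[1,\infty]$, including $q=\infty$, whereas for $p=n$ it holds for all finite $q$ but fails at $q=\infty$. In that range one obtains $\sup_{t\in(0,T)}\|\nabla v(\cdot,t)\|_{L^q(\Omega)}<\infty$; the analogous estimate without the gradient yields $\sup_{t\in(0,T)}\|v(\cdot,t)\|_{L^q(\Omega)}<\infty$ under the weaker requirement $n(\frac1p-\frac1q)<2$, which is automatic in the same range, and the two bounds together give $\|v\|_{L^\infty(0,T;W^{1,q}(\Omega))}<\infty$.

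The only genuinely delicate step is the bookkeeping of exponents: verifying that the time integral converges exactly in the asserted range of $q$ and treating the borderline cases $p=n$ and $q=\infty$. Everything else is the routine semigroup machinery, which can be quoted from the standard references (e.g.\ those already cited in the paper), so I do not expect any real obstacle beyond this.
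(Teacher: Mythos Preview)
The paper does not actually prove this lemma; it simply quotes it as \cite[Lemma 4.1]{Horstmann-Winkler_2005}. Your Duhamel/semigroup argument is precisely the standard proof given in that reference, and your verification that the time integral $\int_0^\infty(1+\sigma^{-\frac12-\frac n2(\frac1p-\frac1q)})e^{-\lambda\sigma}\,d\sigma$ converges exactly when $q<\frac{np}{(n-p)_+}$ (with $q=\infty$ admissible once $p>n$) is correct.

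One small remark: you justify the $L^p$--$L^q$ gradient smoothing estimate by saying ``which is available because $p\le q$'', but the lemma also allows $q<p$. This is harmless, since for $q<p$ on the bounded domain $\Omega$ one has $L^p(\Omega)\hookrightarrow L^q(\Omega)$ and can simply apply the $L^q$--$L^q$ version of the estimate (or note that the $W^{1,q}$ bound for the largest admissible $q$ implies it for all smaller ones). With that caveat, nothing is missing.
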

\begin{remark}\label{not-v-but-u}
Let a pair of $(u,v)$ 
solve \eqref{KS} classically. 
As to 
$\Phi(t)$ 
defined as 
\eqref{def of phi}, 
we note that 
it is sufficient only 
to deal with the blow-up time for 
$u$ in $L^p(\Omega)$-norm 
under the condition $p>n$ guaranteed by 
\eqref{C1}. 
In other words, 
the blow-up time for 
$v$ does not affect 
on that 
for $\Phi(t)$. 
We should 
explain that 
the blow-up time for $u$ 
in $L^p(\Omega)$-norm is 
larger than or equal to that for $v$ in 
$W^{1,\infty}(\Omega)$-norm 
(see Definition \ref{tdf}). 
Indeed, 
by the contraposition of Lemma \ref{W}, 
we can find that 
if 
$v$ blows up in 
$W^{1,\infty}(\Omega)$-norm, 
then 
$u$ blows up in 
$L^{p}(\Omega)$-norm 
for all $p > n$, 
and hence 
the blow-up time for $u$ 
in $L^p(\Omega)$-norm is 
larger than or equal to that for $v$ in 
$W^{1,\infty}(\Omega)$-norm 
under the condition $p>n$. 
\end{remark} 
\begin{remark}\label{local}
The condition \eqref{btc} in the 
definition of 
a blow-up time in the classical sense 
can be replaced with 
\begin{align}\label{btca}
\lim\limits_{t \nearrow t^{\ast}} 
	(\|u(\cdot,t)\|_{L^{\infty}(\Omega)}
	+
	\|v(\cdot,t)
	\|_{W^{1,k}(\Omega)})
	=\infty, 
\end{align}
where $k>n$ because of the condition 
$q>\frac{1}{\eta-1}=n-1$ 
in Theorem \ref{main}. 
Indeed, 
we see from Remark 
\ref{not-v-but-u} and 
the continuous embedding 
$W^{1,\infty}(\Omega) 
\hookrightarrow W^{1,2q}(\Omega)$ 
for all $q \ge 1$ and  
$W^{1,2q}(\Omega) \hookrightarrow 
L^{\infty}(\Omega)$ 
for $q>\frac{n}{2}$
that if 
$v$ in $W^{1,2q}(\Omega)$-norm 
or in $L^{\infty}(\Omega)$-norm blows up, 
then that in $W^{1,\infty}(\Omega)$-norm 
also blows up 
for $n \ge 2$. 
An argument 
similar to that in 
Remark \ref{not-v-but-u} 
implies 
that \eqref{btc} can be replaced with 
\eqref{btca}. 
\end{remark} 
The following lemma enables us to 
show that 
a maximal existence time 
results in unboundedness in 
$L^p$-spaces for smaller $p \in [1,\infty)$ 
(see \cite[Theorem 2.2]{Freitag_2018}). 
\begin{lem}\label{smaller-p}
Let $u_0 \in C(\overline{\Omega})$ and $v_0 
\in C^1(\overline{\Omega})$. 
If a 
solution $(u,v)$ of \eqref{KS} 
in $\Omega \times (0, T_{\rm max})$ 
has a blow-up time $T_{\rm max}<\infty$ 
in the classical sense, 
then there exists $p \ge 1$ fulfilling 
\[
p \ge 
		\max 
		\left\{ 
		\frac{n}{2}\left(m_2-m_1\right),\ 
		n\left(m_2-m_1-1\right) 
		\right\}
\]
such that 
\begin{align*}
\limsup_{t \nearrow T_{\rm max}}\, 
\left\|u(\cdot,t)\right\|_{L^{p_0}(\Omega)} 
= \infty 
\end{align*}
for all 
$p_0 > p$.
\end{lem}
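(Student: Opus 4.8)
The plan is to prove the contrapositive. Assume that there is some
\[
p_0 > \max\left\{\tfrac{n}{2}(m_2-m_1),\ n(m_2-m_1-1)\right\}
\]
with $p_0 \ge 1$ and $\sup_{t \in (0,T_{\rm max})}\|u(\cdot,t)\|_{L^{p_0}(\Omega)} < \infty$; I would then show that
\[
\sup_{t\in(0,T_{\rm max})}\bigl(\|u(\cdot,t)\|_{L^{\infty}(\Omega)} + \|v(\cdot,t)\|_{W^{1,\infty}(\Omega)}\bigr) < \infty,
\]
which by Lemma \ref{or} and Definition \ref{tdf} contradicts the hypothesis that $T_{\rm max}$ is a blow-up time in the classical sense. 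Note that since $\alpha>0$, the powers $(u+\alpha)^{s}$ are smooth and bounded below by a positive constant on any region where $u$ is bounded, so no degeneracy obstructs the estimates below even though $m_1,m_2$ are arbitrary reals.

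First I would upgrade the $L^{p_0}$-bound on $u$ to a bound on $\nabla v$. Viewing the second equation of \eqref{KS} as $v_t=\Delta v-v+u$ with homogeneous Neumann data, Lemma \ref{W} yields $\nabla v \in L^{\infty}\bigl((0,T_{\rm max});L^{q_0}(\Omega)\bigr)$ for every $q_0<\frac{np_0}{(n-p_0)_+}$, and even $q_0=\infty$ once $p_0>n$. When $p_0\le n$, a finite number of the iteration steps described next raises the integrability of $u$ past $n$, after which I may and do assume that $\|\nabla v(\cdot,t)\|_{L^{\infty}(\Omega)}$ is bounded on $(0,T_{\rm max})$.

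The heart of the matter is a Moser--Alikakos iteration for the first equation. Testing it with $(u+\alpha)^{p-1}$ and integrating by parts (using the Neumann condition) produces, for all large $p$, an inequality of the form
\begin{align*}
&\frac{1}{p}\frac{d}{dt}\int_{\Omega}(u+\alpha)^{p} + c_p\int_{\Omega}\Bigl|\nabla(u+\alpha)^{\frac{p+m_1-1}{2}}\Bigr|^{2} \\
&\qquad\le C_p\int_{\Omega}(u+\alpha)^{p+2m_2-m_1-3}|\nabla v|^{2},
\end{align*}
with $c_p>0$ and $C_p$ growing only polynomially in $p$. Estimating the right-hand side by Hölder's inequality together with the $L^{q_0}$-bound on $\nabla v$, and then applying the Gagliardo--Nirenberg inequality of Lemma \ref{G-N} to $w=(u+\alpha)^{(p+m_1-1)/2}$, one bounds it by $\tfrac{1}{2}c_p\int_{\Omega}|\nabla w|^{2}$ plus a constant multiple of a \emph{lower} power-norm of $u$ (ultimately controlled by the fixed $L^{p_0}$-bound). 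The decisive point is that the two lower thresholds on $p_0$ are precisely what force the Gagliardo--Nirenberg interpolation exponent to stay strictly below the critical value, so that the gradient term can be absorbed on the left and a Grönwall / time-integration argument upgrades a uniform $L^{p}$-bound on $u$ to a uniform $L^{\rho p}$-bound for some fixed $\rho>1$. Iterating and tracking the polynomial-in-$p$ constants then yields $u\in L^{\infty}\bigl((0,T_{\rm max});L^{\infty}(\Omega)\bigr)$; feeding this back into the $v$-equation via parabolic smoothing gives $v\in L^{\infty}\bigl((0,T_{\rm max});W^{1,\infty}(\Omega)\bigr)$, completing the contraposition.

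I expect the main obstacle to be the bookkeeping in the iteration: one has to choose the test-function exponents and the Gagliardo--Nirenberg parameters so that, at \emph{every} stage, the interpolation coefficient is bounded away from the critical value \emph{uniformly} in the iteration index, and it is exactly the quantitative requirement ``$p_0>\frac{n}{2}(m_2-m_1)$ and $p_0>n(m_2-m_1-1)$'' that makes this possible. Obtaining these thresholds in sharp (rather than merely sufficient) form is the delicate part, and is the content of Freitag's argument \cite[Theorem 2.2]{Freitag_2018}.
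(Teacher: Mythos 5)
The paper does not prove this lemma at all: it is imported verbatim from Freitag \cite[Theorem 2.2]{Freitag_2018}, and the authors' ``proof'' is simply that citation. So there is no in-paper argument to compare yours against; what you have written is an attempted reconstruction of the proof of the cited result. Your overall strategy is the correct and standard one --- pass to the contrapositive, use Lemma \ref{W} to convert an $L^{p_0}$-bound on $u$ into an $L^{q_0}$-bound on $\nabla v$, run the $(u+\alpha)^{p-1}$ testing procedure (which is exactly the computation of Lemma \ref{ulemma}), absorb the right-hand side via H\"older and Gagliardo--Nirenberg, and bootstrap to $L^\infty$ by Moser--Alikakos iteration before closing with Lemma \ref{or}. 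This is indeed the architecture of the Tao--Winkler/Freitag extensibility arguments.

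The gap is that every step that actually carries the content of the lemma is asserted rather than carried out. The claim that ``the two lower thresholds on $p_0$ are precisely what force the Gagliardo--Nirenberg interpolation exponent to stay strictly below the critical value'' is the entire theorem: you never write down the H\"older exponents pairing $(u+\alpha)^{p+2m_2-m_1-3}$ against $|\nabla v|^2$, never compute the interpolation parameter $a$ for $w=(u+\alpha)^{(p+m_1-1)/2}$, and never verify that subcriticality of that exponent is equivalent to $p_0>\frac{n}{2}(m_2-m_1)$ while the admissibility of $q_0<\frac{np_0}{(n-p_0)_+}$ in the H\"older step is what produces the second threshold $p_0>n(m_2-m_1-1)$. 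Likewise the uniform-in-iteration-index control of the constants, which you correctly identify as the delicate point, is left entirely to ``tracking.'' Since you end by deferring exactly these computations to \cite{Freitag_2018} --- the very reference the lemma is quoted from --- the proposal is a plausible plan for a proof, consistent with the literature and containing no visible error, but it does not itself establish the statement.
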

In the proof of Theorem \ref{main} 
we will use 
the following corollary 
in order to remove a gap 
between the blow up time for solutions 
$(u,v)$ of \eqref{KS} in the classical sense 
and that in $\Phi$-measure. 
\begin{corollary}\label{non-gap} 
Let $1\le p,q \le \infty$. 
Let $t^{\ast}$ be the blow-up time in the 
classical sense 
and $t_{p,q}^{\ast}$ the blow-up time 
in the measure $\Phi(t)$ defined as 
\eqref{def of phi}\/{\rm :} 
\begin{align*}
\lim
\limits_{t \nearrow t_{p,q}
^{\ast}} \Phi(t) = \infty. 
\end{align*}
Then under the condition \eqref{C1}, 
\begin{align}\label{time-equal}
t^{\ast} = t_{p,q}^{\ast}.
\end{align}
\end{corollary}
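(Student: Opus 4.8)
\textbf{Proof proposal for Corollary~\ref{non-gap}.}
The plan is to establish the two inequalities $t_{p,q}^{\ast}\le t^{\ast}$ and $t_{p,q}^{\ast}\ge t^{\ast}$ separately, i.e.\ to show that $\Phi$ stays finite on the whole interval $[0,t^{\ast})$ but becomes unbounded as $t\nearrow t^{\ast}$. Recall from Definition~\ref{tdf} that $t^{\ast}$ is exactly the maximal existence time $T_{\rm max}$ of the classical solution $(u,v)$ (finite here by hypothesis) and that $\Phi(t)$ is only defined where $(u,v)$ lives; the latter already gives $t_{p,q}^{\ast}\le t^{\ast}$ for free.

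First I would handle the inequality $t_{p,q}^{\ast}\ge t^{\ast}$, which is the one actually used when transferring the lower bound for $t_{p,q}^{\ast}$ (coming from the differential inequality for $\Phi$) to a lower bound for $t^{\ast}$ in Theorem~\ref{main}. Fix $t\in[0,t^{\ast})$. Since $(u,v)$ is a classical solution, $u(\cdot,t)\in C(\overline{\Omega})$ and $v\in L_{\rm loc}^{\infty}([0,t^{\ast});W^{1,\infty}(\Omega))$, so $\nabla v(\cdot,t)\in L^{\infty}(\Omega)$; hence both $\int_{\Omega}(u(\cdot,t)+\alpha)^{p}$ and $\int_{\Omega}|\nabla v(\cdot,t)|^{2q}$ are finite and $\Phi(t)<\infty$. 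By continuity $\Phi$ is bounded on every compact subinterval of $[0,t^{\ast})$, so $\Phi$ cannot blow up before $t^{\ast}$, which yields $t^{\ast}\le t_{p,q}^{\ast}$.

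It remains to verify that $\Phi$ really does blow up at $t^{\ast}$, so that the equality \eqref{time-equal} is not vacuous. Here I would use that $p$ satisfies \eqref{C1}, hence in particular $p>\max\{\tfrac{n}{2}(m_2-m_1),\,n(m_2-m_1-1),\,n\}$, so $p$ lies strictly above the threshold $\max\{\tfrac{n}{2}(m_2-m_1),\,n(m_2-m_1-1)\}$ appearing in Lemma~\ref{smaller-p}; applying that lemma to the classical blow-up at $t^{\ast}=T_{\rm max}$ with an exponent just below $p$ gives $\limsup_{t\nearrow t^{\ast}}\|u(\cdot,t)\|_{L^{p}(\Omega)}=\infty$. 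Since $\Phi(t)\ge\tfrac1p\int_{\Omega}(u(\cdot,t)+\alpha)^{p}\ge\tfrac1p\|u(\cdot,t)\|_{L^{p}(\Omega)}^{p}$, this forces $\limsup_{t\nearrow t^{\ast}}\Phi(t)=\infty$, and together with the previous step we conclude $t_{p,q}^{\ast}=t^{\ast}$. If one wants the literal statement $\lim_{t\nearrow t^{\ast}}\Phi(t)=\infty$ rather than merely $\limsup$, one can upgrade it using the super-linear inequality $\tfrac{d\Phi}{dt}\le A\Phi^{f(\eta,r)}+B\Phi^{f(\eta,1)}+C\Phi^{\eta}+D$ (all exponents exceeding $1$), by a comparison argument showing that once $\Phi$ is large near $t^{\ast}$ it cannot drop back to a bounded level; alternatively one notes that $\sup_{[0,t^{\ast})}\|u\|_{L^{p}}<\infty$ with $p>n$ would, via Lemma~\ref{W}, keep $v$ bounded in $W^{1,\infty}(\Omega)$ as well, so $\Phi$ bounded on $[0,t^{\ast})$ would contradict the classical blow-up hypothesis through the contrapositive of Lemma~\ref{smaller-p}.

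The only real obstacle is this last step: one must ensure that the exponent $p$ built into $\Phi$ is simultaneously large enough for Lemma~\ref{smaller-p} — so that the loss of $L^{\infty}$-control of $u$ at $t^{\ast}$ propagates down to loss of $L^{p}$-control — and, where it is invoked, large enough for Lemma~\ref{W}. Condition \eqref{C1} is designed exactly to guarantee both; the remaining parts of the argument are soft.
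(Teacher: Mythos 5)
Your proposal is correct and follows essentially the same route as the paper: one inequality comes from the fact that classical regularity (equivalently, the embedding $L^\infty(\Omega)\hookrightarrow L^p(\Omega)$ as in \eqref{norm}) keeps $\Phi$ finite up to $t^{\ast}$, and the reverse inequality comes from Lemma \ref{smaller-p} under condition \eqref{C1}. Your extra care about upgrading $\limsup$ to $\lim$ is a point the paper glosses over, but it does not change the argument.
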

\begin{proof}
We obtain from the continuous embedding 
$L^\infty(\Omega) \hookrightarrow 
L^p(\Omega)$ 
such as  \eqref{norm} that 
if $\Phi(t)$ blows up at $t=t_{p,q}^\ast$, 
then 
$u$ in $L^{\infty}(\Omega)$-norm  
also blows up, 
and hence it is obvious that 
\begin{align*}
t^{\ast} \le 
t_{p,q}^{\ast}. 
\end{align*} 
Here we note from 
Remark \ref{not-v-but-u} that 
the blow-up for $\Phi(t)$ 
implies that for $u$ in $L^{p}(\Omega)$-norm. 
On the other hand, 
we can find from 
Lemma \ref{smaller-p} that 
the blow-up in the classical sense 
implies that in $\Phi$-measure 
for $p$ satisfying \eqref{C1}.  
Therefore, under the condition 
\eqref{C1}, 
we can attain that
\begin{align*}
t^{\ast} \ge 
t_{p,q}^{\ast}.
\end{align*} 
Thus we obtain \eqref{time-equal}. 
\end{proof}
Hereafter, 
we assume that a pair 
$(u,v)$ is a classical solution of 
\eqref{KS}.
\section
{An estimate for 
$\frac{1}{p}\int_{\Omega}
\left(u(\cdot,t)+\alpha\right)^{p}$}
\label{Section 3}
In this section 
we estimate the first term of $\Phi (t)$: 
\begin{align*}
\frac{1}{p} \int_{\Omega} 
\left(u(\cdot,t) 
+\alpha\right)^p. 
\end{align*} 
The following lemma gives 
an estimate for 
the derivative of the first term in $\Phi(t)$. 
\begin{lem}\label{ulemma}
	For $p \geq 1$, we have 
	\begin{align}\label{u}
		&\frac{1}{p} \frac{d}{dt}\int_{\Omega} 
		(u+\alpha)^{p}
		+
		\frac{p-1}{2}\int_{\Omega} (u+\alpha)^
		{p+m_1-3}|
		\nabla u|^2 
		\\\nonumber
		&\le \frac{{\chi}^2(p-1)}
		{2}\int_{\Omega} 
		(u+\alpha) ^{p+2m_2-m_1-3}
|\nabla v|^2.
	\end{align}
\end{lem}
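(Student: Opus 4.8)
The plan is to test the first equation of \eqref{KS} against $(u+\alpha)^{p-1}$ and integrate over $\Omega$. Since $p \ge 1$, the function $(u+\alpha)^{p-1}$ is an admissible test function and we obtain
\begin{align*}
\frac{1}{p}\frac{d}{dt}\int_{\Omega}(u+\alpha)^p
= \int_{\Omega}(u+\alpha)^{p-1}\nabla\cdot\bigl[(u+\alpha)^{m_1-1}\nabla u-\chi u(u+\alpha)^{m_2-2}\nabla v\bigr].
\end{align*}
First I would integrate by parts; the Neumann boundary condition $\nabla u\cdot\nu=\nabla v\cdot\nu=0$ on $\partial\Omega$ kills the boundary term, leaving
\begin{align*}
\frac{1}{p}\frac{d}{dt}\int_{\Omega}(u+\alpha)^p
= -(p-1)\int_{\Omega}(u+\alpha)^{p-2}\nabla u\cdot\bigl[(u+\alpha)^{m_1-1}\nabla u-\chi u(u+\alpha)^{m_2-2}\nabla v\bigr].
\end{align*}
Expanding the bracket splits the right-hand side into a (good, dissipative) diffusion term $-(p-1)\int_{\Omega}(u+\alpha)^{p+m_1-3}|\nabla u|^2$ and a chemotactic cross term $\chi(p-1)\int_{\Omega}u(u+\alpha)^{p+m_2-4}\nabla u\cdot\nabla v$.

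Next I would move the diffusion term to the left-hand side and estimate the cross term. The key tool is Young's inequality applied pointwise to absorb half of the diffusion integral: writing the integrand of the cross term as
\[
\chi(p-1)\,u(u+\alpha)^{p+m_2-4}\nabla u\cdot\nabla v
= \Bigl[\sqrt{p-1}\,(u+\alpha)^{\frac{p+m_1-3}{2}}\nabla u\Bigr]\cdot\Bigl[\chi\sqrt{p-1}\,u(u+\alpha)^{\frac{p+2m_2-m_1-5}{2}}\nabla v\Bigr],
\]
Young's inequality with the split $ab \le \tfrac12 a^2 + \tfrac12 b^2$ gives
\[
\chi(p-1)\,u(u+\alpha)^{p+m_2-4}\nabla u\cdot\nabla v
\le \frac{p-1}{2}(u+\alpha)^{p+m_1-3}|\nabla u|^2
+ \frac{\chi^2(p-1)}{2}\,u^2(u+\alpha)^{p+2m_2-m_1-5}|\nabla v|^2.
\]
Finally I would use the crude pointwise bound $u \le u+\alpha$, so that $u^2(u+\alpha)^{p+2m_2-m_1-5}\le (u+\alpha)^{p+2m_2-m_1-3}$, which converts the second term above into exactly the right-hand side of \eqref{u}. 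Integrating this pointwise estimate over $\Omega$ and combining with the identity for $\frac{1}{p}\frac{d}{dt}\int_{\Omega}(u+\alpha)^p$ yields the claimed inequality, the remaining half of the diffusion integral staying on the left.

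I do not expect a genuine obstacle here; the only points requiring a little care are the justification of the integration by parts (the case $p=1$ makes $(u+\alpha)^{p-2}$ potentially singular-looking, but since $\alpha>0$ the weight $(u+\alpha)^{p-2}$ is bounded away from $0$ and $\infty$ on compact time intervals, and $u$ is $C^{2,1}$ on $\overline{\Omega}\times(0,T)$, so everything is smooth) and the bookkeeping of the exponents in the Young splitting so that the two pieces match $(u+\alpha)^{p+m_1-3}|\nabla u|^2$ and $(u+\alpha)^{p+2m_2-m_1-3}|\nabla v|^2$ respectively. The estimate $u\le u+\alpha$ is the only place where the regularization parameter $\alpha>0$ is used to simplify the exponent; this is deliberate and costs nothing since it only enlarges the right-hand side.
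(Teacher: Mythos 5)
Your proposal is correct and follows essentially the same route as the paper: integrate by parts using the Neumann conditions, split off the dissipative term, and absorb the cross term via Young's inequality with exactly the same weight $(u+\alpha)^{\frac{p+m_1-3}{2}}|\nabla u|$. The only cosmetic difference is that the paper applies the bound $u\le u+\alpha$ before Young's inequality while you apply it after; the resulting estimate is identical.
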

\begin{proof}
	The first equation of \eqref{KS} 
	and integration by parts enable us to 
	see
	\begin{align*}
		\frac{1}{p}\dfrac{d}{dt}\int_{\Omega}(u+
\alpha)^p
		&=-\int_{\Omega}\nabla (u+\alpha)^{p-1} 
\cdot
		\left[(u+\alpha)^{m_1-1}\nabla u
		-\chi u(u+\alpha)^{m_2-2}
		\nabla v
\right]
		\\
		&=-(p-1)\int_{\Omega}(u+\alpha)^{p+m_1-3}
		|\nabla u|^2
		+\chi (p-1)\int_{\Omega}(u+\alpha)
		^{p+m_2-4}u\nabla u
		\cdot \nabla v
		\\
		&\leq -(p-1)\int_{\Omega}
		(u+\alpha)^{p+m_1-3}
		|\nabla u|^2+\chi(p-1)
		\int_{\Omega}(u+\alpha)
		^{p+m_2-3}|\nabla u\cdot\nabla v|.
	\end{align*}
By using Young's inequality, 
we obtain 
	\begin{align*}
		& \chi (p-1)(u+\alpha)^{p+m_2-3}
		|\nabla u\cdot \nabla v|
		\\
		&=\sqrt{p-1}(u+\alpha)^
		{\frac{p+m_1-3}{2}}
		|\nabla u|\cdot\chi\sqrt{p-1}
		(u+\alpha)^{\frac{p+2m_2-m_1-3}{2}}
		|\nabla v|
		\\
		&\leq\frac{p-1}{2}
		(u+\alpha)^{p+m_1-3}
		|\nabla u|^2
		+\frac{{\chi}^2(p-1)}{2}
		(u+\alpha)^{p+2m_2-m_1-3}
		|\nabla v|^2,
	\end{align*}
and hence 
we have
\begin{align*}
		&\frac{1}{p}\frac{d}{dt}\int_{\Omega}(u+
		\alpha)^p
		\\
		&\leq -\frac{p-1}{2}\int_{\Omega}
		(u+\alpha)^{p+m_1-3}|\nabla u|^2
		+\frac{{\chi}^2(p-1)}{2}
		\int_{\Omega}{(u+\alpha)}^
{p+2m_2-m_1-3
}|
		\nabla v|^2.
\end{align*}
	Therefore we can attain 
	the conclusion \eqref{ulemma}.
	\end{proof}
\section
{
An estimate for $\frac{1}{q}
\int_{\Omega}\left|\nabla v(\cdot,t)
\right|^{2q}
$
}
\label{Section 4}
In this section
we estimate the second term of $\Phi (t)$:
\begin{align*}
	\frac{1}{q}\int_{\Omega} |\nabla v(\cdot,t)|
	^{2q}.
\end{align*}
Although the following lemma is proved 
in a similar way as in the proof 
of the previous work (see 
\cite[Lemma 2.1]{Anderson-Deng_2017}), 
we shall reconstruct the method in 
\cite{Anderson-Deng_2017} and 
remove the convexity assumption. 
The following lemma presents 
an estimate 
for the derivative of the second term of 
$\Phi(t)$.
\begin{lem}\label{vlemma} 
	If $\delta \in (0,\frac{2(q-1)}{q^2})$, 
	then 
	there exists $D_{\delta}>0$ 
such that 
	\begin{align}
		\label{v}
		&\frac{1}{q}\frac{d}{dt} \int_{\Omega}
	 |\nabla v |^{2q}
		+ \left(\frac{2(q-1)}{q^2}
		-\delta\right)
		\int_{\Omega}\left|\nabla |\nabla v|^q
\right|^2 
		+ 2
		\int_{\Omega}|\nabla v|^{2q}
		\\\nonumber
		&\leq \frac{4(q-1)+n}{2}
	\int_{\Omega} 
		(u+\alpha)^2 |\nabla v|^{2q-2} 
		+ D_{\delta}
	\end{align}
for all q $\ge$ 1. 
\end{lem}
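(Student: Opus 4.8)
The plan is to differentiate $\frac{1}{q}\int_\Omega |\nabla v|^{2q}$ in time, use the $v$-equation $v_t=\Delta v-v+u$, integrate by parts, and then split off a problematic boundary integral that is controlled by Lemma~\ref{non-convex}. First I would write, with $w:=|\nabla v|^2$,
\begin{align*}
\frac{1}{q}\frac{d}{dt}\int_\Omega |\nabla v|^{2q}
=\int_\Omega |\nabla v|^{2q-2}\nabla v\cdot\nabla v_t
=\int_\Omega |\nabla v|^{2q-2}\nabla v\cdot\nabla(\Delta v-v+u).
\end{align*}
The term coming from $-v$ gives exactly $-\int_\Omega |\nabla v|^{2q}$ (integrating by parts, or just using $\nabla v\cdot\nabla v=|\nabla v|^2$), which is better than what we need; the term from $u$, after integration by parts, produces $-\int_\Omega u\,\nabla\cdot(|\nabla v|^{2q-2}\nabla v)$ plus a boundary term that vanishes because $\nabla v\cdot\nu=0$. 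The delicate term is $\int_\Omega|\nabla v|^{2q-2}\nabla v\cdot\nabla\Delta v$: the identity $\nabla v\cdot\nabla\Delta v=\tfrac12\Delta|\nabla v|^2-|D^2v|^2$ (Bochner) turns it into $\tfrac12\int_\Omega|\nabla v|^{2q-2}\Delta w-\int_\Omega|\nabla v|^{2q-2}|D^2v|^2$, and a further integration by parts on the first piece yields the gradient term $-\tfrac{q-1}{2}\int_\Omega|\nabla v|^{2q-4}|\nabla w|^2$ together with the boundary integral $\tfrac12\int_{\partial\Omega}|\nabla v|^{2q-2}\frac{\partial w}{\partial\nu}$.

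Next I would convert $\int_\Omega|\nabla v|^{2q-4}|\nabla w|^2$ into $\int_\Omega|\nabla|\nabla v|^q|^2$: since $\nabla|\nabla v|^q=\tfrac{q}{2}|\nabla v|^{q-2}\nabla w$, we have $|\nabla|\nabla v|^q|^2=\tfrac{q^2}{4}|\nabla v|^{2q-4}|\nabla w|^2$, so $-\tfrac{q-1}{2}\int_\Omega|\nabla v|^{2q-4}|\nabla w|^2=-\tfrac{2(q-1)}{q^2}\int_\Omega|\nabla|\nabla v|^q|^2$, which is the coefficient appearing in \eqref{v} before the $\delta$-correction. The $\delta$ is spent precisely on the boundary term: by Lemma~\ref{non-convex} applied to $w=v$ (which satisfies $\partial v/\partial\nu=0$), $\tfrac12\int_{\partial\Omega}|\nabla v|^{2q-2}\frac{\partial|\nabla v|^2}{\partial\nu}\le\tfrac{\delta}{2}\int_\Omega|\nabla|\nabla v|^q|^2+\tfrac12 C_\delta$ — one can absorb constants into $D_\delta$ — and, re-scaling $\delta$ if necessary, this is what produces the $-\bigl(\tfrac{2(q-1)}{q^2}-\delta\bigr)$ coefficient.

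It remains to bound the indefinite terms $-\int_\Omega|\nabla v|^{2q-2}|D^2v|^2$ (drop it, it is $\le 0$), $-\int_\Omega u\,\nabla\cdot(|\nabla v|^{2q-2}\nabla v)$, and the extra pieces generated along the way by the factor $u+\alpha$ rather than $u$ (replacing $u$ by $u+\alpha-\alpha$ costs a term $\alpha\int_\Omega\nabla\cdot(\cdots)$ that is again a boundary term, hence zero, or a lower-order term absorbable into $D_\delta$). Expanding $\nabla\cdot(|\nabla v|^{2q-2}\nabla v)=|\nabla v|^{2q-2}\Delta v+(q-1)|\nabla v|^{2q-4}\nabla v\cdot\nabla w$ and using $|\Delta v|\le\sqrt{n}\,|D^2v|$ together with pointwise Cauchy--Schwarz, every such term is of the form $\int_\Omega(u+\alpha)|\nabla v|^{q-1}\cdot|\nabla v|^{q-1}|D^2v|$ (up to the factor $q-1$ and combinatorial constants), which Young's inequality splits as $\tfrac14\int_\Omega|\nabla v|^{2q-2}|D^2v|^2+(\text{const})\int_\Omega(u+\alpha)^2|\nabla v|^{2q-2}$; the first half is absorbed by the $-\int_\Omega|\nabla v|^{2q-2}|D^2v|^2$ we set aside, and the constants are bookkept so that the surviving coefficient on $\int_\Omega(u+\alpha)^2|\nabla v|^{2q-2}$ is exactly $\tfrac{4(q-1)+n}{2}$. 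I would finish by noting $2\int_\Omega|\nabla v|^{2q}\le\int_\Omega|\nabla v|^{2q}+(\text{something absorbable})$ — actually the $-\int_\Omega|\nabla v|^{2q}$ from the $-v$ term should be kept and the factor $2$ on the left of \eqref{v} arranged by a further application of Young/interpolation of $\int_\Omega|\nabla v|^{2q}$ against the gradient term with the leftover room, pushing any constants into $D_\delta$. The main obstacle is purely organizational: tracking all the numerical constants from the repeated Young's inequalities (and the $u$ versus $u+\alpha$ shift, and the exact split of $\delta$) carefully enough that the coefficients collapse to the clean values $\tfrac{2(q-1)}{q^2}-\delta$, $\tfrac{4(q-1)+n}{2}$, and $2$ stated in \eqref{v}, uniformly in $q\ge1$.
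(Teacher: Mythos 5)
Your route is the paper's: differentiate, insert $v_t=\Delta v-v+u$, apply the identity $2\nabla v\cdot\nabla\Delta v=\Delta|\nabla v|^2-2|D^2v|^2$, integrate by parts so that the only delicate piece is the boundary integral $\int_{\partial\Omega}|\nabla v|^{2q-2}\,\nabla(|\nabla v|^2)\cdot\nu$, control it by Lemma~\ref{non-convex} at cost $\delta\int_{\Omega}\left|\nabla|\nabla v|^q\right|^2+C_\delta$ (this is exactly where the paper removes convexity), integrate the $u$-term by parts, and split the resulting products by Young's inequality so that the $|D^2v|^2$ contributions cancel and the coefficient $\frac{4(q-1)+n}{2}$ survives on $\int_{\Omega}(u+\alpha)^2|\nabla v|^{2q-2}$. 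All of that matches Lemma~\ref{vlemma}'s proof.

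The one concrete flaw is the chain rule at the very start: since $\frac{d}{dt}|\nabla v|^{2q}=q\,|\nabla v|^{2(q-1)}\,\partial_t|\nabla v|^2$ and $\partial_t|\nabla v|^2=2\nabla v\cdot\nabla v_t$, one has $\frac{1}{q}\frac{d}{dt}\int_{\Omega}|\nabla v|^{2q}=2\int_{\Omega}|\nabla v|^{2q-2}\nabla v\cdot\nabla v_t$, not $\int_{\Omega}|\nabla v|^{2q-2}\nabla v\cdot\nabla v_t$. With the correct factor, the $-v$ term contributes exactly $-2\int_{\Omega}|\nabla v|^{2q}$, which is precisely the coefficient $2$ appearing in \eqref{v}; no further step is needed. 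Your proposed repair at the end --- manufacturing the missing copy of $\int_{\Omega}|\nabla v|^{2q}$ by interpolating it against the gradient term and pushing constants into $D_\delta$ --- would not go through as stated, because an inequality of the form $\int_{\Omega}|\nabla v|^{2q}\le\varepsilon\int_{\Omega}\left|\nabla|\nabla v|^q\right|^2+C$ with a time-independent $C$ is not available here (a Poincar\'e or Gagliardo--Nirenberg step would leave a lower-order norm of $\nabla v$ that is not a priori bounded). Restoring the factor of $2$ removes this spurious difficulty, and the remaining bookkeeping collapses to the stated constants exactly as in the paper.
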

\begin{proof}
	We fix $\delta 
	\in (0,\frac{2(q-1)}{q^2})$.  
	Then we 
infer that
	\begin{align}\label{non-by-parts}
		\frac{1}{q}\frac{d}{dt}\int_{\Omega}|\nabla 
		v|^{2q}
		&=\int_{\Omega}|\nabla v|^{2(q-1)}
		\frac{\partial}{\partial t}|\nabla v|^2. 
	\end{align}
Due to \eqref{KS}, the second equation 
in \eqref{KS} entails us to see
	\begin{align}\label{pa-v}
		\frac{\partial}
		{\partial t}|\nabla v|^2
		&=2\nabla v
		\cdot\nabla v_t\\\nonumber
		&=2\nabla v\cdot \nabla[\Delta v-v+u]\\
\nonumber
		&=2\nabla v\cdot\nabla \Delta v
		-2|\nabla v|^2
		+2
\nabla u\cdot \nabla v.
\end{align}
Noticing from the chain rule that 
\begin{align*}
\Delta |\nabla v|^2 
&= 
\sum_{i=1}^n 
\frac{\partial^2}{\partial x_i^2} 
\left(
\sum_{j=1}^n\left(
\frac{\partial v}{\partial x_j} 
\right)^2\right)
\\
&=\sum_{i,j=1}^n
\frac{\partial }{\partial x_i} 
\left(2\frac{\partial v}{\partial x_j}
\cdot 
\frac{\partial^2 v}
{\partial x_i\partial x_j}\right)
\\
&= 2\left|D^2v\right|^2+2\nabla v \cdot 
\nabla 
\Delta v,  
\end{align*}
where $D^2 v$ denotes the 
Hessian matrix, 
we obtain
	\begin{align*}
		2\nabla v\cdot \nabla \Delta v
		=\Delta |\nabla v|^2-2|D^2v|. 
	\end{align*}
This together with 
\eqref{pa-v} yields 
	\begin{align*}
		\frac{\partial}{\partial t}
		|\nabla v|^2=\Delta|\nabla v|^2
		-2\left|D^2v\right|^2-2|\nabla v|^2
		+2\nabla u\cdot\nabla v.
	\end{align*}
Applying this identity to 
\eqref{non-by-parts}, we have 
	\begin{align}\label{v-parts}
		&\frac{1}{q}\dfrac{d}{dt}\int_{\Omega}
		|\nabla v|^{2q}
		+
		2\int_{\Omega}
		|\nabla v|^{2(q-1)}
		|D^2 v|^2+2\int_{\Omega}|\nabla v|^{2q}
		\\\nonumber
		&=\int_{\Omega}|\nabla v|^{2(q-1)}
		\Delta|\nabla v|^2
		+2\int_{\Omega}|\nabla v|^{2(q-1)}
\nabla u
		\cdot\nabla v.
	\end{align}
Here we 
see from integration by parts that 
	\begin{align*}
&\int_{\Omega}|\nabla v|^{2(q-1)}
\Delta |\nabla v|^2
=\int_{\partial\Omega}|\nabla v|^{2q-2
}
\nabla\left(|\nabla v|^2\right)\cdot \nu
-\int_{\Omega}\nabla\left(|\nabla v|
^{2(q-1)
}\right)
\cdot\nabla\left(|\nabla v|^2\right).
\end{align*}
	If $\delta 
	\in (0,\frac{2(q-1)}{q^2})$
	, then
	there exists $D_{\delta}>0$ such that
	\begin{align}\label{delta-estimate}
		\int_{\partial\Omega}|\nabla v|
		^{2q-2
}\nabla\left(|\nabla v|^2\right)\cdot\nu
		\leq\delta\int_{\Omega}
\left|\nabla|\nabla v|^q\right|
		^2+D_{\delta}
	\end{align}
(see Lemma \ref{non-convex}),  
and hence we rewrite 
\eqref{v-parts} as 
\begin{align}\label{v-all}
		&\frac{1}{q}\dfrac{d}{dt}\int_{\Omega}
		|\nabla v|^{2q}
		+\int_{\Omega}
		\nabla
\left(|\nabla v|^{2(q-1)
}\right)
		\cdot\nabla
\left(|\nabla v|^2\right)
		+2\int_{\Omega}|\nabla v
|^{2(q-1)}|
		D^2 v|^2
\\\nonumber
&\quad\,+2\int_{\Omega}|\nabla v|^{2q}
		\\\nonumber
		&\leq 2\int_{\Omega}|\nabla v|^{2(q-1)}
		\nabla u \cdot\nabla v
		+\delta\int_{\Omega}
\left|\nabla |\nabla v|^q\right|^2
		+D_{\delta}.
	\end{align}
Applying integration by parts 
to the first term on the right-hand side 
of \eqref{v-all} gives
\begin{align}\label{v0}
&2\int_{\Omega}|\nabla v|^{2(q-1)}
		\nabla u \cdot\nabla v
\\\nonumber
&= -2(q-1)\int_{\Omega}u|\nabla v|^{2(q-2)} 
\nabla
\left(|\nabla v|^2
\right)\cdot\nabla v
-
2\int_{\Omega}u|\nabla v|^{2(q-1)}
		\Delta v.
\end{align}
Now we estimate the following quantities: 
\begin{align*}
u|\nabla v|^{2(q-2)}\nabla\left(|\nabla v|
^2\right)
\cdot \nabla v, \quad 
u|\nabla v|^{2(q-1)}\Delta v.
\end{align*}
Using the inequality 
$u \le u+\alpha$, 
the pointwise inequality 
$|\Delta v|^2 \le n|D^2 v|^2$ and 
the Young inequality, 
we can 
notice that 
\begin{align}\label{v1}
-u|\nabla v|^{2(q-2)}\nabla\left(|\nabla v|
^2\right)
\cdot \nabla v
&\le 2 \cdot \frac{1}{2} 
|\nabla v|^{q-2}
\left|\nabla |\nabla v|^2\right|
\cdot (u+\alpha) |\nabla v|^{q-1}
\\\nonumber
&\le 
\frac{1}{4}\left(|\nabla v|^{q-2}
\left|\nabla|\nabla v|^2\right|\right)^2
+\left((u+\alpha)|\nabla v|^{q-1}\right)^2, 
\\\label{v2}
-u|\nabla v|^{2(q-1)}|\Delta v|
		&\le 
2 \cdot \sqrt{\frac{1}{n}}|\nabla v|
		^{q-1}
		\Delta v
		\cdot\sqrt{\frac{n}{4}}(u+\alpha)|\nabla 
		v|^{q-1}
		\\\nonumber
		&\le \frac{1}{n}
		|\nabla v|^{2(q-1)}|\Delta v|^2 
		+\frac{n}{4}
		\left((u+\alpha)|\nabla v|^{q-1}
		\right)^2 
\\\nonumber
&\le 
\int_{\Omega}|\nabla v|^{2(q-1)
}|D^2 v|^2 + 
\frac{n}{4}
		\left((u+\alpha)|\nabla v|^{q-1}
		\right)^2. 
	\end{align}
Applying \eqref{v1} and \eqref{v2} 
to \eqref{v0}, we obtain 
that
\begin{align}\label{v-last}
2\int_{\Omega}|\nabla v|^{2(q-1)}
		\nabla u \cdot\nabla v
&\le 
\frac{q-1}{2}\int_{\Omega}
|\nabla v|^{2(q-2)}\left|\nabla |\nabla v|
^2\right|^2
\\
\nonumber
&\quad\,+
\frac{4(q-1)+n}{2}
\int_{\Omega}
(u+\alpha)^2|\nabla v|^{2q-2}
\\\nonumber
&\quad\,
+2\int_{\Omega}|\nabla v|^{2(q-1)
}|D^2 v|^2.
\end{align}
From \eqref{v-all} and \eqref{v-last}, 
by using that 
	\begin{align*}
		&|\nabla v|^{2(q-2)}
		\left|\nabla|\nabla v|^2\right|^2
		=\frac{4}{q^2}\left|\nabla |\nabla v|^q
\right|^2 
	\end{align*}
as well as 
	\begin{align*}
		&\nabla (|\nabla v|^{2(q-1)})\cdot
		\nabla(|\nabla v|^2)
		=(q-1)|\nabla v|^{2(q-2)}
		\left|\nabla|\nabla v|^2\right|^2
	\end{align*}
for $q>1$, 
we can confirm that 
\begin{align*}
		&\frac{1}{q}\dfrac{d}{dt}\int_{\Omega}
		|\nabla v|^{2q}
		+
\frac{2(q-1)}{q^2}
		\int_{\Omega}\left|\nabla |\nabla v|^q
\right|^2
		+2\int_{\Omega}|\nabla v|^{2q}
		\\\nonumber
		&\le 
\frac{4(q-1)+n}{2}
\int_{\Omega}
(u+\alpha)^2|\nabla v|^{2q-2}
+\delta\int_{\Omega}\left|\nabla |\nabla v|^q
\right|^2
+D_{\delta}.
	\end{align*}
Thus we arrive at \eqref{vlemma}. 
\end{proof}
\begin{corollary}\label{vremark}
	If $\Omega$ is a convex bounded 
domain, 
then 
\eqref{v} is rewritten as 
	\begin{align*}
	&\frac{1}{q}\frac{d}{dt} \int_{\Omega} |
	\nabla v |^{2q}
	+\frac{2(q-1)}{q^2}\int_{\Omega}
\left|\nabla |
	\nabla v|^q\right|^2 
	+ 2\int_{\Omega}|\nabla v|^{2q}
	\\
	&\leq \frac{4(q-1)+n}{2}
	\int_{\Omega} (u+\alpha)^2 
	|\nabla v|^{2q-2} 
	\end{align*}
for all $q \ge 1$. 
In 
other words, 
$\delta$ and $D_{\delta}$ 
are taken as $0$ in Lemma \ref{vlemma}. 
\end{corollary}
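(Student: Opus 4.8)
The plan is to establish Corollary~\ref{vremark} as an immediate specialization of Lemma~\ref{vlemma} to convex domains, tracking exactly where the constant $D_\delta$ entered the proof. The only place in the proof of Lemma~\ref{vlemma} where $\delta$ and $D_\delta$ arise is in the estimate \eqref{delta-estimate} for the boundary integral $\int_{\partial\Omega}|\nabla v|^{2q-2}\nabla(|\nabla v|^2)\cdot\nu$, which invoked Lemma~\ref{non-convex}. When $\Omega$ is convex, Lemma~\ref{convex} tells us that $\frac{\partial|\nabla v|^2}{\partial\nu}\le 0$ on $\partial\Omega$ for any $w\in C^2(\overline\Omega)$ with vanishing Neumann data, so in particular the boundary integrand is nonpositive and the whole boundary term is $\le 0$. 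Hence the role played by $\delta\int_\Omega|\nabla|\nabla v|^q|^2+D_\delta$ in the non-convex case is played by $0$ here.

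First I would recall that, since $(u,v)$ is a classical solution of \eqref{KS}, the function $v(\cdot,t)$ satisfies $v\in C^{2,1}(\overline\Omega\times(0,T))$ and $\nabla v\cdot\nu=0$ on $\partial\Omega$, so Lemma~\ref{convex} applies to $w=v(\cdot,t)$ for each fixed $t\in(0,T)$, giving
\begin{align*}
\int_{\partial\Omega}|\nabla v|^{2q-2}\nabla\left(|\nabla v|^2\right)\cdot\nu
=\int_{\partial\Omega}|\nabla v|^{2q-2}\frac{\partial|\nabla v|^2}{\partial\nu}
\le 0.
\end{align*}
Then I would rerun the argument of Lemma~\ref{vlemma} verbatim, but replacing the inequality \eqref{delta-estimate} by the above. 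Concretely, in passing from \eqref{v-parts} to \eqref{v-all} one drops the $\delta\int_\Omega|\nabla|\nabla v|^q|^2+D_\delta$ on the right-hand side, i.e. one obtains \eqref{v-all} with $\delta=0$ and $D_\delta=0$. From there the manipulations with $|\nabla v|^{2(q-2)}|\nabla|\nabla v|^2|^2=\frac{4}{q^2}|\nabla|\nabla v|^q|^2$ and $\nabla(|\nabla v|^{2(q-1)})\cdot\nabla(|\nabla v|^2)=(q-1)|\nabla v|^{2(q-2)}|\nabla|\nabla v|^2|^2$ are unchanged, and combining with \eqref{v-last} yields exactly the asserted inequality with the full coefficient $\frac{2(q-1)}{q^2}$ in front of $\int_\Omega|\nabla|\nabla v|^q|^2$ and with no additive constant.

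There is essentially no obstacle here: the entire content is the observation that convexity upgrades the boundary bound from ``$\le\delta\int_\Omega|\nabla|\nabla v|^q|^2+D_\delta$'' to ``$\le0$''. The only point requiring a line of care is the applicability of Lemma~\ref{convex} and Lemma~\ref{non-convex}, both of which need $w\in C^2(\overline\Omega)$ with $\partial w/\partial\nu=0$ on $\partial\Omega$; this is guaranteed by the definition of classical solution (Definition~1.1) and the Neumann condition in \eqref{KS}, so for each $t\in(0,T)$ we may take $w=v(\cdot,t)$. I would therefore phrase the proof as: ``Since $\Omega$ is convex, Lemma~\ref{convex} gives $\int_{\partial\Omega}|\nabla v|^{2q-2}\nabla(|\nabla v|^2)\cdot\nu\le0$, so \eqref{delta-estimate} holds with $\delta=0$ and $D_\delta=0$; repeating the proof of Lemma~\ref{vlemma} with this choice yields the claim.''
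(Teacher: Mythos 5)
Your proposal is correct and follows essentially the same route as the paper: invoke Lemma \ref{convex} to conclude that the boundary integral $\int_{\partial\Omega}|\nabla v|^{2q-2}\nabla(|\nabla v|^2)\cdot\nu$ is nonpositive, then rerun the proof of Lemma \ref{vlemma} with \eqref{delta-estimate} replaced by this sharper bound, so that $\delta$ and $D_\delta$ may be taken as $0$. Your added remark on verifying the hypotheses of Lemma \ref{convex} for $w=v(\cdot,t)$ is a small but welcome extra precision over the paper's terser argument.
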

\begin{proof}
Suppose that $\Omega$ 
	is a convex 
bounded 
domain. 
Then we see from Lemma \ref{convex} that 
	\begin{align*}
		\nabla \left(|\nabla v|^2
\right)\cdot \nu \leq 0, 
	\end{align*}
and so 
we can rewrite \eqref{delta-estimate} 
as 
\begin{align*}
\int_{\partial\Omega}|\nabla v|
		^{2q-2
}\nabla\left(|\nabla v|^2\right)\cdot\nu 
\le 0. 
\end{align*}
By 
an argument similar to 
the proof of Lemma 
\ref{vlemma}, 
we can attain the 
conclusion. 
\end{proof}
\section{Proof of the main theorem}
\label{Section 5}
In this section we prove 
Theorem \ref{main}. 
The following lemma plays 
an important role in the proof of Theorem 
\ref{KS}. 
\begin{lem}\label{PhiLemma}
Let $p, q\geq 1$,\ $p \neq -m_1+1$,\  
$\delta \in (0,\frac{2(q-1)}{q^2})$,\  
and let $\Phi(t)$ be 
defined as \eqref{def of phi}. 
Then  
there exists $D_{\delta}>0$ such that
	\begin{align}\label{Phi}
		&\frac{d\Phi}{dt}
		+\frac{p-1}{2}
\left(\frac{2}{p+m_1-1}
		\right)^2\int_{\Omega}\left|\nabla (u+
		\alpha)
		^{\frac{p+m_1-1}{2}}\right|^2
		+\left(\frac{2(q-1)}{q^2}-\delta 
		\right)\int_{\Omega}
\left|\nabla |\nabla v|^q\right|^2
		\\\nonumber
		&\leq
		\frac{{\chi}^2(p-1)}{2}\int_{\Omega} 
		(u+\alpha)^
		{
p+2m_2-m_1-3
}|\nabla v|^2
		+\frac{4(q-1)+n}{2}
		\int_{\Omega} (u+\alpha)^2 
		|\nabla v|^
		{2q-2}
		+D_{\delta}.
	\end{align}
\end{lem}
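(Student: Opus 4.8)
The plan is to obtain \eqref{Phi} simply by adding the two differential inequalities already established for the individual pieces of $\Phi(t)$. First I would invoke Lemma~\ref{ulemma} to control $\frac{1}{p}\frac{d}{dt}\int_{\Omega}(u+\alpha)^p$; this produces the dissipative term $\frac{p-1}{2}\int_{\Omega}(u+\alpha)^{p+m_1-3}|\nabla u|^2$ on the left-hand side and the chemotactic term $\frac{\chi^2(p-1)}{2}\int_{\Omega}(u+\alpha)^{p+2m_2-m_1-3}|\nabla v|^2$ on the right-hand side. Next I would rewrite the dissipative term so that it involves a single gradient: by the chain rule $\nabla(u+\alpha)^{\frac{p+m_1-1}{2}}=\frac{p+m_1-1}{2}(u+\alpha)^{\frac{p+m_1-3}{2}}\nabla u$, hence
\[
(u+\alpha)^{p+m_1-3}|\nabla u|^2=\left(\frac{2}{p+m_1-1}\right)^2\left|\nabla(u+\alpha)^{\frac{p+m_1-1}{2}}\right|^2 ,
\]
which is legitimate precisely because the hypothesis $p\neq-m_1+1$ ensures $p+m_1-1\neq0$; this is the only point at which that assumption is used.

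Then I would invoke Lemma~\ref{vlemma} with the same $\delta\in(0,\frac{2(q-1)}{q^2})$ to obtain a constant $D_{\delta}>0$ together with the inequality controlling $\frac{1}{q}\frac{d}{dt}\int_{\Omega}|\nabla v|^{2q}$, whose left-hand side carries the dissipative term $\left(\frac{2(q-1)}{q^2}-\delta\right)\int_{\Omega}\left|\nabla|\nabla v|^q\right|^2$ and the extra nonnegative term $2\int_{\Omega}|\nabla v|^{2q}$, and whose right-hand side is $\frac{4(q-1)+n}{2}\int_{\Omega}(u+\alpha)^2|\nabla v|^{2q-2}+D_{\delta}$. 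Adding the two inequalities, recalling from \eqref{def of phi} that $\Phi(t)=\frac1p\int_{\Omega}(u+\alpha)^p+\frac1q\int_{\Omega}|\nabla v|^{2q}$, and discarding the nonnegative term $2\int_{\Omega}|\nabla v|^{2q}$ from the left-hand side, yields exactly \eqref{Phi}.

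There is essentially no obstacle in this lemma: it is a bookkeeping step that assembles Lemmas~\ref{ulemma} and~\ref{vlemma}. The only item worth flagging is the admissibility of the chain-rule manipulation above, which fails when $p+m_1-1=0$, and this is the reason the nondegeneracy condition $p\neq-m_1+1$ is imposed in the statement. The genuine analytical difficulty is postponed to the subsequent sections, where the two terms $\int_{\Omega}(u+\alpha)^{p+2m_2-m_1-3}|\nabla v|^2$ and $\int_{\Omega}(u+\alpha)^2|\nabla v|^{2q-2}$ on the right-hand side of \eqref{Phi} have to be absorbed — by means of Young's and H\"older's inequalities combined with the Gagliardo--Nirenberg inequality of Lemma~\ref{G-N} — into the dissipative terms $\int_{\Omega}\left|\nabla(u+\alpha)^{\frac{p+m_1-1}{2}}\right|^2$, $\int_{\Omega}\left|\nabla|\nabla v|^q\right|^2$ and a suitable power of $\Phi(t)$ itself.
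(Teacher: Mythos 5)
Your proof is correct and follows essentially the same route as the paper: it adds the inequalities of Lemmas~\ref{ulemma} and~\ref{vlemma}, rewrites $\int_{\Omega}(u+\alpha)^{p+m_1-3}|\nabla u|^2$ via the chain rule using $p+m_1-1\neq 0$, and drops the nonnegative term $2\int_{\Omega}|\nabla v|^{2q}$. No discrepancies to report.
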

\begin{proof}
	The combination of \eqref{u}
	and \eqref{v} 
	yields \eqref{Phi}. 
In fact, due to \eqref{u}
	and \eqref{v}, we can find that 
for $p,q\ge1$, 
there exists $\delta \in (0,
	\frac{2(q-1)}{q^2})$ 
such that 
	\begin{align*}
		&
		\frac{1}{p} 
\frac{d}{dt}\int_{\Omega} 
		(u+\alpha)^{p}
		+\frac{1}{q} 
\frac{d}{dt}\int_{\Omega}
	 |\nabla v |^{2q}
		\\\nonumber
		&+
		\frac{p-1}{2}\int_{\Omega} (u+\alpha)^
		{p+m_1-3}|
		\nabla u|^2 
		+
		\left(\frac{2(q-1)}{q^2}-\delta\right)
		\int_{\Omega}|\nabla |\nabla v|^q|^2
		\\\nonumber
		&\le \frac{{\chi}^2(p-1)}
		{2}\int_{\Omega} 
		(u+\alpha) ^{
p+2m_2-m_1-3
}
|\nabla v|^2
		+
		\frac{4(q-1)+n}{2}\int_{\Omega} 
		(u+\alpha)^2 |\nabla v|^{2q-2} 
		+ D_{\delta}.  
	\end{align*}
	On the other hand, 
we notice that 
if $p \neq -m_1+1$, then 
	\begin{align*}
		\int_{\Omega}(u+\alpha)^{p+m_1-3}|\nabla u|
		^2=\left(\frac{2}{p+m_1-1}
		\right)^2\int_{\Omega}\left|\nabla (u+
		\alpha)
		^{\frac{p+m_1-1}{2}}\right|^2.
	\end{align*}
This together with \eqref{def of phi} 
clearly proves \eqref{PhiLemma}.	
\end{proof}
We are now in a position to complete 
the proof of Theorem \ref{main}. 
\begin{proof}
	[{\rm \bf Proof of Theorem \ref{main}}]
	\noindent
	We divide the proof  
	into four steps:
	\begin{enumerate}[\bf (Step 1):]	
		\item {
Estimates 
		for 
		$\int_{\Omega} \left(u+\alpha
\right)^{p+2m_2-m_1-3}
		|\nabla v|^2$\ and
		$\int_{\Omega} \left(u+\alpha
\right)^2 
		|\nabla v|^{2q-2}$. 
}
		\item {
Estimates for $
		\int_{\Omega}(u+
		\alpha)^{p\eta}$\ and
		$\int_{\Omega}
		|\nabla v|^{2q\eta}$. 
}
		\item {
Deriving an ordinary 
differential inequality 
		for $\Phi(t)$.  
		}
		\item {Establishing a lower bound 
		for the blow-up time $t^{\ast}$ 
in the classical sense.  
		}
	\end{enumerate}

\noindent
By means of these processes, 
we can argue a method to obtain 
a lower bound for the blow-up time 
for solutions of \eqref{KS}. 

\noindent\\
{\bf (Step 1)}
We  shall show that 
	\begin{align}\label{step1-1}
		&\int_{\Omega} (u+\alpha)
		^{p+2m_2-m_1-3}|\nabla v|^2
		\leq 
		\frac{1}{(q\eta)'}
		\left(\int_{\Omega}(u+\alpha)^{p\eta}
		\right )^{\frac{1}{\beta_1}}
		|\Omega|^{\frac{1}{{\beta_1}'}}
		+\frac{1}{q\eta}\int_{\Omega}
		|\nabla v|^{2q\eta},
		\\\label{step1-2}
		&\int_{\Omega} (u+\alpha)
		^{2}|\nabla v|^{2q-2}
		\leq \frac{1}{(q'\eta)'}
		\left(\int_{\Omega}(u+\alpha)^{p\eta}
		\right )^{\frac{1}{\beta_2}}
		|\Omega|^{\frac{1}{{\beta_2}'}}
		+ \frac{1}{q'\eta}\int_{\Omega}
		|\nabla v|^{2q\eta},
	\end{align}
	where $\eta$ is the constant defined 
as \eqref{df of eta} 
and $'$ denotes the H\"older conjugate 
exponent 
e.g., $q':=\frac{q}{q-1}$ 
and 
	\begin{align*}
		&{\beta_1}
:=\frac{p}{p+2m_2-m_1-3}\cdot
		\frac{q\eta-1}{q}>1,
		\\
		&{\beta_2}
		:=\frac{p}{2}\cdot\frac{q\eta-q+1}{q}>1. 
	\end{align*}
Applying Young's inequality to the first term 
on the left-hand side of \eqref{step1-1} 
gives 
	\begin{align}\label{step1-1-1}
		\int_{\Omega} (u+\alpha)
		^{p+2m_2-m_1-3}|\nabla v|^2
		\leq 
		\frac{1}{(q\eta)'}
		\int_{\Omega}(u+\alpha)^{(p+2m_2-m_1-3)
		(q\eta)'}
		+\frac{1}{q\eta}\int_{\Omega}
		|\nabla v|^{2q\eta}.
	\end{align}
Thanks to boundedness 
of $\Omega$, 
using H\"older's inequality, 
we have 
	\begin{align}\label{step1-1-2}
		\int_{\Omega}(u+\alpha)^{(p+2m_2-m_1-3)
		(q\eta)'}
		\leq 
		\left(\int_{\Omega}(u+\alpha)^{p\eta}
		\right )^{\frac{1}{\beta_1}}
		|\Omega|^{\frac{1}{{\beta_1}'}}, 
	\end{align}
where the condition \eqref{C2} enables us 
to take $\beta_1>1$ as 
\begin{align*}
{\beta_1}=\frac{p\eta}{(p+2m_2-m_1-3)
		(q\eta)'}=\frac{p}{p+2m_2-m_1-3}\cdot
		\frac{q\eta-1}{q}.
\end{align*}
Plugging \eqref{step1-1-2} 
into \eqref{step1-1-1}, 
we obtain \eqref{step1-1}.
	Similarly, 
combining Young's inequality with 
H\"older's inequality yields
	\begin{align*}
		\int_{\Omega} (u+\alpha)
		^{2}|\nabla v|^{2q-2}
		&\leq \frac{1}{(q'\eta)'}
		\int_{\Omega}(u+\alpha)^{2(q'\eta)'}
		+\frac{1}{q'\eta}\int_{\Omega}
		|\nabla v|^{2q\eta}
		\\
		\nonumber
		&\leq \frac{1}{(q'\eta)'}
		\left(\int_{\Omega}(u+\alpha)^{p\eta}
		\right )^{\frac{1}{\beta_2}}
		|\Omega|^{\frac{1}{{\beta_2}'}}
		+ \frac{1}{q'\eta}\int_{\Omega}
		|\nabla v|^{2q\eta}, 
	\end{align*}
where the condition \eqref{C2} enables us to 
take $\beta_2 >1$ as
	\begin{align*}
		&{\beta_2}=\frac{p\eta}{2(q'\eta)'}
		=\frac{p}{2}\cdot\frac{q\eta-q+1}{q}.
	\end{align*}
Therefore we arrive 
at \eqref{step1-2}. 

\noindent\\
	{\bf (Step 2)}
The purpose of this step is 
to obtain 
the following inequalities:  
	\begin{align}\label{step2-1}
		\int_{\Omega}{(u+\alpha)}^{p\eta}
		&\leq C_1(m_1) \ep \int_{\Omega}
		\left|\nabla (u+\alpha)^{\frac{p+m_1-1}
		{2}}\right|^2
		\\\nonumber
		&\quad\,+\frac{C_2(m_1)}{\ep^
		{\frac{1-ar\eta}{ar\eta}}}
		\left(\int_{\Omega}
		(u+\alpha)^{p}
		\right)^{f(\eta,r)}
		\hspace{-2mm}+
		C_3(m_1)\left(\int_{\Omega}
		(u+\alpha)^{p}
		\right)^{\eta}, 
		\\\label{step2-2}
		\int_{\Omega}|\nabla v|^{2q\eta}
		&\le C_4\ep\int_{\Omega}
		\left|\nabla|\nabla v|^q\right|^2
		+\frac{C_5}{\ep
		^{\frac{\eta}{2-\eta}}}
		\left(\int_{\Omega}
		|\nabla v|^{2q}\right)^{f(\eta,1)}
		+
		C_6\left(\int_{\Omega}
		|\nabla v|^{2q}\right)^{\eta}
	\end{align}	
	for all $\ep >0$, where $\eta$, $f$ and 
$r$ are defined as \eqref{df of eta}, 
\eqref{def of f} and 
\eqref{def of r}, respectively, and 
	\begin{align*}
		&C_1(m_1):=2^{2r\eta-1} ar\eta{c_1}^{2r
		\eta}
		, \quad
		C_2(m_1):=2^{2r\eta} 
(1-ar\eta){c_1}^{2r\eta}
		, \quad
		C_3(m_1):=2^{2r\eta-1} 
{c_1}^{2r\eta},
		\\
		&C_4:=2^{2\eta-1}\cdot
\frac{\eta}{2}\cdot{c_2}^{2\eta}
		, \quad
		C_5:=2^{2\eta-1}\cdot
\frac{2-\eta}{2}\cdot{c_2}^{2\eta}
		, \quad
		C_6:=2^{2\eta-1} {c_2}^{2\eta}.
	\end{align*}
In order to estimate 
$\int_{\Omega}(u+\alpha)^{p\eta}$\ and\  
$\int_{\Omega}|\nabla v|^{2q\eta}$ 
without wasting the power of diffusion, 
we apply Lemma \ref{G-N}. 
	Under the conditions \eqref{C1} and 
	\eqref{C2},
	we can show existence of $c_1>0$ such
	that
	\begin{align*}
		&\int_{\Omega}{(u+\alpha)}^{p\eta}
		\\
		&=\left\|(u+\alpha)^{\frac{p+m_1-1}{2}}
		\right\|_{L^{2r\eta}(\Omega)}^{2r\eta}
		\\
		&\leq {c_1}^{2r\eta}
		\left(
		{\left\|\nabla (u+\alpha)^{\frac{p+m_1-1}
		{2}}
		\right\|_{L^2(\Omega)}^a
		}
		{{\left\|(u+\alpha)^{\frac{p+m_1-1}{2}}
		\right\|}_{L^{2r}(\Omega)}^{1-a}
		}
		+{\left\|(u+\alpha)^{\frac{p+m_1-1}{2}}
		\right\|_
		{L^{2r}(\Omega)}
		}
		\right)^{2r\eta}
		\\
		&\leq 2^{2r\eta-1}{c_1}^{2r\eta}
		\left(
		{\left\|\nabla (u+\alpha)^{\frac{p+m_1-1}
		{2}}
		\right\|_{L^2(\Omega)}^{2ar\eta}
		}
		{{\left\|(u+\alpha)^{\frac{p+m_1-1}{2}}
		\right\|}_{L^{2r}(\Omega)}^
{2(1-a)r\eta
}
		}
		+{\left\|(u+\alpha)^{\frac{p+m_1-1}{2}}
		\right\|_
		{L^{2r}(\Omega)}^{2r\eta}
		}
		\right), 
	\end{align*}	
where 
\begin{align*}
a:=
\frac{
\frac{1}{2r}-\frac{1}{2r\eta}}
{\frac{1}{2r}+\frac{1}{n}-\frac{1}{2}
}\in(0,1). 
\end{align*}
Noting that 
$2ar\eta <2$ 
which implies the condition 
$p>\frac{\eta(m_1-1)}{(\eta-1)(\eta-2)}$ 
guaranteed by \eqref{C2}, 
	thanks to the Young inequality,
	we can estimate the first term 
on the right-hand side as
	\begin{align*}
		&{\left\|\nabla (u+\alpha)^{\frac{p+m_1-1}
		{2}}
		\right\|_{L^2(\Omega)}^{2ar\eta}
		}
		{{\left\|(u+\alpha)^{\frac{p+m_1-1}{2}}
		\right\|}_{L^{2r}(\Omega)}^
{2(1-a)r\eta
}
		}
		\\
		&
		\leq
		ar\eta\ep
		{\left\|\nabla(u+\alpha)^{\frac{p+m_1-1}
		{2}}
		\right\|_
		{L^{2}(\Omega)}^{2}
		}
		+
		\frac{1-ar\eta}{\ep^
		{\frac{1-ar\eta}{ar\eta}}}
		{\left\|(u+\alpha)^{\frac{p+m_1-1}{2}}
		\right\|_
		{L^{2r}(\Omega)}^
{
2r\frac{(1-a)\eta}
		{1-ar\eta}
}
		}
	\end{align*}
	for all $\ep>0$, 
and we see 
that 
$\frac{(1-a)\eta}{1-ar\eta}$ is rewritten as 
\begin{align*}
\frac{(1-a)\eta}{1-ar\eta}
&= \frac{\left(1-
\frac{\frac{1}{2r}-\frac{1}{2r\eta}}
{\frac{1}{2r}+\frac{1}{n}-\frac{1}{2}}
\right)\eta}{1-
\frac{\frac{1}{2r}-\frac{1}{2r\eta}}
{\frac{1}{2r}+
\frac{1}{n}-\frac{1}{2}}\cdot r\eta}
\\
&= 
\frac{\left(\left(\frac{1}{2r}+\frac{1}{n}-
\frac{1}{2}\right)-\left(
\frac{1}{2r}-\frac{1}{2r\eta}
\right)\right)\eta}
{\frac{1}{2r}+\frac{1}{n}-\frac{1}{2}
-\left(\frac{1}{2r}-\frac{1}{2r\eta}
\right)r\eta}
\\
&= 
1+ 
\frac{\eta-1} 
		{n\left(\frac{1}{n}-
\frac{\eta}{2}+\frac{1}
		{2r}\right)}
=f(\eta,r). 
\end{align*}
Thus 
we obtain
	\begin{align*}
		\int_{\Omega}{(u+\alpha)}^{p\eta}
		&\leq C_1(m_1) \ep \int_{\Omega}
		\left|\nabla (u+\alpha)^{\frac{p+m_1-1}
		{2}}\right|^2
		\\\nonumber
		&\quad\,+\frac{C_2(m_1)}{\ep^
		{\frac{1-ar\eta}{ar\eta}}}
		\left(\int_{\Omega}
		(u+\alpha)^{p}
		\right)^{f(\eta,r)}
		+
		C_3(m_1)\left(\int_{\Omega}
		(u+\alpha)^{p}
		\right)^{\eta},
	\end{align*}	
	with 
		$C_1(m_1)=2^{2r\eta-1} ar\eta{c_1}^{2r
		\eta}$, 
		$C_2(m_1)=2^{2r\eta-1} (1-ar\eta){c_1}
		^{2r
		\eta}$ and 
		$C_3(m_1)=2^{2r\eta-1} {c_1}^{2r\eta}$ 
	for 
all $\ep>0$. 
	In a similar way, there exists $c_2>0$ 
	such that
	\begin{align*}
\int_{\Omega}|\nabla v|^{2q\eta}
		&=\left\||\nabla v|^q
		\right\|_{L^{2\eta}(\Omega)}^{2\eta}
		\\
		\nonumber
		&\leq {c_2}^{2\eta}
		\left(
		{\left\|\nabla|\nabla v|^q
		\right\|_
		{L^2(\Omega)}^{\frac{1}{2}}
		}
		{\left\||\nabla v|^q\right\|_{L^2(\Omega)}
		^{\frac{1}{2}}
		}
		+{\left\||\nabla v|^q\right\|_
		{L^2(\Omega)}
		}
		\right)^{2\eta}
		\\
		\nonumber
		&\leq 2^{2\eta-1}{c_2}^{2\eta}
		\left(
		{\left\|\nabla|\nabla v|^q
		\right\|_
		{L^2(\Omega)}^{\eta}
		}
		{\left\||\nabla v|^q\right\|_{L^2(\Omega)}
		^{\eta}
		}
		+{\left\||\nabla v|^q\right\|_
		{L^2(\Omega)}^{2\eta}
		}
		\right)
		\\
		\nonumber
		&\leq 2^{2\eta-1}{c_2}^{2\eta}
		\left(
		\frac{\eta}{2} \ep
		{\left\|\nabla|\nabla v|^q
		\right\|_
		{L^2(\Omega)}^{2}
		}
		+
		\frac{2-\eta}{2\ep
		^{\frac{\eta}{2-\eta}}}
		{\left\||\nabla v|^q\right\|_{L^2(\Omega)}
		^{\frac{2\eta}{2-\eta}
}
		}
		+{\left\||\nabla v|^q\right\|_
		{L^2(\Omega)}^{2\eta}
		}
		\right)
		\\
		\nonumber
		&=:C_4\ep\int_{\Omega}
		\left|\nabla|\nabla v|^q\right|^2
		+\frac{C_5}{\ep
		^{\frac{\eta}{2-\eta}}}
		\left(\int_{\Omega}
		|\nabla v|^{2q}\right)^{f(\eta,1)}
		+
		C_6\left(\int_{\Omega}
		|\nabla v|^{2q}\right)^{\eta}
	\end{align*}
	for all $\ep >0$, where 
		$C_4=2^{2\eta-1} \cdot 
\frac{\eta}{2} \cdot {c_2}^{2\eta}$, 
		$C_5=2^{2\eta-1}\cdot
\frac{2-\eta}{2}\cdot{c_2}^{2\eta}$ and 
		$C_6=2^{2\eta-1} {c_2}^{2\eta}$. 
Hence 
we can obtain \eqref{step2-1} 
and \eqref{step2-2}. 
 
\noindent\\ 
{\bf (Step 3)} 
Plugging the results of Step $1$ and 
Step $2$ 
into \eqref{PhiLemma}, 
we shall show 
an ordinary 
differential inequality 
for $\Phi(t)$: 
	\begin{align}\label{d-i} 
		\frac{d\Phi}{dt}\leq A{\Phi} 
		^{f(\eta,r) 
}
		+B{\Phi}^{f(\eta,1)}+C{\Phi}^{\eta}+D.
	\end{align}
To this end we first 
deal with the first and 
second terms 
on the right-hand side 
of \eqref{PhiLemma}. 
Applying \eqref{step2-1} to 
$(\int_{\Omega}(u+\alpha)
^{p\eta})^{\frac{1}{\beta_1}}$ 
and $(\int_{\Omega}
(u+\alpha)^{p\eta})^{\frac{1}{\beta_2}}
$ 
appearing in 
\eqref{step1-1} and 
\eqref{step1-2} yields 
\begin{align}\label{step3-1}
		&\int_{\Omega} (u+\alpha)
		^{p+2m_2-m_1-3}|\nabla v|^2
		\leq 
		\frac{1}{(q\eta)'}
\cdot R^{\frac{1}{\beta_1}}
		|\Omega|^{\frac{1}{{\beta_1}'}}
		+\frac{1}{q\eta}\int_{\Omega}
		|\nabla v|^{2q\eta},
		\\
		\label{step3-2}
		&\int_{\Omega} (u+\alpha)
		^{2}|\nabla v|^{2q-2}
		\leq 
\frac{1}{(q'\eta)'}
\cdot R^{\frac{1}{\beta_2}}
		|\Omega|^{\frac{1}{{\beta_2}'}}
		+ \frac{1}{q'\eta}\int_{\Omega}
		|\nabla v|^{2q\eta}, 
	\end{align}
where $R$ is given by 
\begin{align}\label{Rdef}
R&:=C_1(m_1) \ep \int_{\Omega}
		\left|\nabla (u+\alpha)^{\frac{p+m_1-1}
		{2}}\right|^2
		\\\nonumber
		&
		\quad\,
		+\frac{C_2(m_1)}{\ep^
		{\frac{1-ar\eta}{ar\eta}}}
		\left(\int_{\Omega}
		(u+\alpha)^{p}
		\right)^{f(\eta,r)}
		\hspace{-2mm}+
		C_3(m_1)\left(\int_{\Omega}
		(u+\alpha)^{p}
		\right)^{\eta} 
\end{align}
with $\ep>0$ and 
$C_1(m_1), C_2(m_1)$ 
and $C_3(m_1)$ 
defined in Step 2. 
In order to show that 
$R^{\frac{1}{\beta_i}} \le R$ $(i=1,2)$, 
we shall show that $R \ge 1$. 
Indeed, 
focusing 
on the second term 
on the right-hand side of \eqref{Rdef}, 
by using 
the 
inequality $u+\alpha \ge \alpha >0$
and choosing $\ep$ small enough, we 
obtain 
	\begin{align*}
		\frac{C_2(m_1)
}{\ep^{\frac{1-ar\eta}
		{ar\eta}}}
		\left(\int_{\Omega} (u+\alpha)^p\right)
		^
		{f(\eta,r)}
		\geq 
\frac{C_2(m_1)}{\ep^
		{\frac{1-ar\eta}{ar\eta}}}
		\left(\alpha^p|\Omega|
		\right)^{f(\eta,r)}
\ge 1. 
		\end{align*}
Combining this inequality 
with \eqref{Rdef} 
entails that 
$R \ge 1$. 
Therefore we arrive at 
\[
R^{\frac{1}{\beta_i}} \le R \quad 
(i=1,2). 
\] 
Plugging this inequality 
into \eqref{step3-1} and \eqref{step3-2}, 
we obtain 
\begin{align*}
		&\int_{\Omega} (u+\alpha)
		^{p+2m_2-m_1-3}|\nabla v|^2
		\leq 
		\frac{1}{(q\eta)'}\cdot
		R 
		|\Omega|^{\frac{1}{{\beta_1}'}}
		+\frac{1}{q\eta}\int_{\Omega}
		|\nabla v|^{2q\eta},
		\\
		&\int_{\Omega} (u+\alpha)
		^{2}|\nabla v|^{2q-2}
		\leq 
		\frac{1}{(q'\eta)'}\cdot
		R 
		|\Omega|^{\frac{1}{{\beta_2}'}}
		+ \frac{1}{q'\eta}\int_{\Omega}
		|\nabla v|^{2q\eta}.
	\end{align*}
By applying these two inequalities   
to \eqref{PhiLemma}, 
we see 
\begin{align}\label{com}
		&\frac{d\Phi}{dt}
		+\frac{p-1}{2}{\left(\frac{2}{p+m_1-1}
		\right)}^2\int_{\Omega}\left|\nabla 
		(u+\alpha)^
		{\frac{p+m_1-1}{2}}\right|^2
\\\nonumber
		&\quad\,+\left(\frac{2(q-1)}{q^2}-\delta 
		\right)\int_{\Omega}
\left|\nabla \left|\nabla v\right|^q\right|^2
		\\\nonumber
		&\leq
		\frac{{\chi}^2(p-1)}{2}
		\left(
		\frac{1}{(q\eta)'} \cdot R 
		|\Omega|^{\frac{1}{{\beta_1}'}}
		+\frac{1}{q\eta}\int_{\Omega}
		|\nabla v|^{2q\eta}
		\right)
		\\\nonumber
		&\quad\,+\frac{4(q-1)+n}{2}
		\left(
		\frac{1}{(q'\eta)'} \cdot
		R 
		|\Omega|^{\frac{1}{{\beta_2}'}}
		+ \frac{1}{q'\eta}\int_{\Omega}
		|\nabla v|^{2q\eta}
		\right)
		+D_{\delta}
		\\\nonumber
		&=
E_1R+E_2\int_{\Omega}|\nabla v|^{2q\eta}
+D_\delta,
	\end{align}
where
\begin{align*}
E_1
&:=
		\frac{{\chi}^2(p-1)}{2}
		\cdot
		\frac{1}{(q\eta)'}
\cdot
		|\Omega|^{\frac{1}
		{{\beta_1}'}}
		+
		\frac{4(q-1)+n}{2}
		\cdot
		\frac{1}{(q'\eta)'}
\cdot
		|\Omega|^{\frac{1}
		{{\beta_2}'}}, 
\\
E_2
&:=
		\frac{{\chi}^2(p-1)}{2}
		\cdot
		\frac{1}{q\eta}
		+
		\frac{4(q-1)+n}{2}
		\cdot
		\frac{1}{q'\eta}.
\end{align*}
Plugging \eqref{step2-2} 
into \eqref{com}, 
we can rearrange 
\eqref{PhiLemma} 
as follows: 	
\begin{align*}
		&\frac{d\Phi}{dt}
		+\frac{p-1}{2}{\left(\frac{2}{p+m_1-1}
		\right)}^2\int_{\Omega}\left|\nabla 
		(u+\alpha)^
		{\frac{p+m_1-1}{2}}\right|^2
		+\left(\frac{2(q-1)}{q^2}-\delta 
		\right)\int_{\Omega}|\nabla |\nabla v|^q|^2
		\\\nonumber
		&\le 
E_1R+
E_2\left(
C_4\ep\int_{\Omega}
		\left|\nabla|\nabla v|^q\right|^2
		+\frac{C_5}{\ep
		^{\frac{\eta}{2-\eta}}}
		\left(\int_{\Omega}
		|\nabla v|^{2q}\right)^{f(\eta,1)}
		+
		C_6\left(\int_{\Omega}
		|\nabla v|^{2q}\right)^{\eta}
\right)
+D_\delta. 
	\end{align*}
Recalling the definition of $R$\ 
(see \eqref{Rdef}), 
we infer 
\begin{align}\label{nPhi}
		&\frac{d\Phi}{dt}
		+\left(\frac{p-1}{2}{\left(\frac{2}
		{p
		+m_1-1}
		\right)}^2
		-
		E_1C_1(m_1)\ep
		\right)
		\int_{\Omega}\left|\nabla 
		(u+\alpha)^
		{\frac{p+m_1-1}{2}}\right|^2
		\\\nonumber
		&\quad\,+
		\left(
		\left(\frac{2(q-1)}{q^2}-\delta 
		\right)	
		-
		E_2C_4\ep
		\right)
		\int_{\Omega}|\nabla |\nabla v|^q|^2
		\\\nonumber
		&\leq
		E_1C_2(m_1)\ep^{-\frac{1-ar\eta}{ar\eta}}
		\left(
		\int_{\Omega}(u+\alpha)^p
		\right)^{f(\eta,r)}
		+
		E_2C_5\ep^{-\frac{\eta}{2-\eta}}
		\left(
		\int_{\Omega} |\nabla v|^{2q}
		\right)^{f(\eta,1)}
		\\\nonumber
		&\quad\,+ 
		E_1C_3(m_1)
		\left(
		\int_{\Omega}(u+\alpha)^p
		\right)^{\eta}
		+
		E_2C_6
		\left(
		\int_{\Omega} |\nabla v|^{2q}
		\right)^{\eta}
		+D_{\delta}. 
	\end{align}
Then it follows that 
the second and third terms 
on the left-hand side 
of \eqref{nPhi} are nonnegative. 
We now fix 
$\delta \in (0,\frac{2(q-1)}{q^2})$  
and choose 
	$\ep$ small enough to satisfy not only 
\eqref{com} but also 
	\begin{align*}
		E_1C_1(m_1)\ep
		\leq \frac{p-1}{2}
		{\left(\frac{2}{p+m_1-1}\right)}^2, 
\quad 
		E_2C_2(m_1)\ep
		\leq \frac{2(p-1)}{q^2}-\delta.
	\end{align*}
Therefore 
we can rewrite \eqref{nPhi} as 
\begin{align*}
		\frac{d\Phi}{dt}
		&\leq
		E_1C_2(m_1)\ep^{-\frac{1-ar\eta}{ar\eta}}
		\left(
		\int_{\Omega}(u+\alpha)^p
		\right)^{f(\eta,r)}
		+
		E_2C_5\ep^{-\frac{\eta}{2-\eta}}
		\left(
		\int_{\Omega} |\nabla v|^{2q}
		\right)^{f(\eta,1)}
		\\\nonumber
		&\quad\,+ 
		E_1C_3(m_1)
		\left(
		\int_{\Omega}(u+\alpha)^p
		\right)^{\eta}
		+
		E_2C_6
		\left(
		\int_{\Omega} |\nabla v|^{2q}
		\right)^{\eta}
		+D_{\delta}.
	\end{align*}
Noting that 
		\begin{align*}
			\int_{\Omega} 
			(u+\alpha)^p\leq p 
			\Phi(t),
			\quad
			\int_{\Omega} 
			|\nabla v|^{2q} \leq q\Phi(t), 
		\end{align*}
we can establish \eqref{d-i} with 
\begin{align*}
A&=A(m_1)
:=p^{f(\eta,r)}
E_1%
C_2(m_1
) 
{\ep}^{-\frac{1-ar\eta}{ar
			\eta}},
\\
B&:=
q^{f(\eta,1)}
E_2
C_5
{\ep}^{-\frac{\eta}{2-\eta}},
\\
C&=C(m_1)
:=p^{\eta}
E_1
C_3(m_1) + q^{\eta} E_2 C_6,
\\
D&:=D_{\delta}.
\end{align*}	
Thus 
\eqref{d-i} holds. 

\noindent\\ 
	{\bf (Step 4)}
In this step we establish 
the following lower bound 
for the blow-up time $t^{\ast}$ 
for solutions 
of \eqref{KS} 
in the classical sense: 
\begin{align}\label{l}
		t^{\ast} \geq 
		\int_{\Phi(0)}^{\infty} 
		\frac{d\tau} 
		{A{\tau}^{f(\eta,r)} 
		+ 
		B{\tau}^{f(\eta,1)} 
		+ 
		C\tau^{\eta}
		 + 
		D}. 
	\end{align}
We first show 
that we can estimate 
a lower bound for the blow-up time 
in $\Phi$-measure. 
Indeed, we put 
\begin{align*}
G(\Phi(t)):=A{\Phi(t)}
		^{f(\eta,r)}
		+B{\Phi(t)}^{f(\eta,1)}+C{\Phi(t)^\eta}+D
\end{align*} 
	and
	\begin{align}\nonumber
		H(x):=\int_{\Phi(0)}^{x}\frac{d\tau}
		{G(\tau)} \quad (x \ge 0). 
	\end{align}
Since 
$f(\eta,s)>1\ (s>0)$, we notice that 
$\lim_{x \nearrow \infty} 
H(x)$ exists, 
and hence 
we obtain 
from 
the chain rule 
and the inequality 
$\frac{d\Phi(t)}{dt} \leq G(\Phi(t))$ 
(see \eqref{d-i})
that 
	\begin{align*}
		\frac{d}{dt}\left[H(\Phi(t))\right]
		&=
		\frac{1}{G(\Phi(t))} 
\cdot 
\frac{d\Phi(t)}{dt} \leq 1.
	\end{align*}
%
By integrating 
from $0$ to $t_{p,q}^{\ast}$, we have 
\begin{align*}
H\left(\Phi(t_{p,q}^{\ast})\right) - 
H\left(\Phi(0)\right) 
\le 
t_{p,q}^{\ast}. 
\end{align*}
Noting that 
$\lim_{t \nearrow {t_{p,q}}^{\ast}} 
\Phi(t) = \infty$ 
and $H(\Phi(0))=0$, 
we can attain 
that 
\begin{align}\label{before-time}
		t_{p,q}^{\ast} \geq 
		\int_{\Phi(0)}^{\infty} 
		\frac{d\tau} 
		{A{\tau}^{f(\eta,r)} 
		+ 
		B{\tau}^{f(\eta,1)} 
		+ 
		C\tau^{\eta}
		 + 
		D}. 
	\end{align}
Furthermore, 
we can 
regard the blow-up time for 
solutions of \eqref{KS} 
in $\Phi$-measure 
as that	 in the classical sense 
under the condition \eqref{C1}, 
i.e., 
\begin{align}\label{befor-after}
t_{p,q}^{\ast}=t^{\ast} 
\end{align}
(see Corollary \ref{non-gap}). 
A combination of 
\eqref{before-time} with 
\eqref{befor-after} 
yields 
\begin{align*}
t^{\ast}=t_{p,q}^{\ast} \geq 
		\int_{\Phi(0)}^{\infty} 
		\frac{d\tau} 
		{A{\tau}^{f(\eta,r)} 
		+ 
		B{\tau}^{f(\eta,1)} 
		+ 
		C\tau^{\eta}
		 + 
		D}. 
\end{align*}
Thus we arrive at \eqref{l}. 
In 
conclusion, 
the proof of Theorem \ref{main} is 
completed.
\end{proof} 
\smallskip
\section*{Acknowledgments}
The authors thank  
Dr.\ Masaaki Mizukami for his advice, 
especially for informing them 
a recent result by
Freitag \cite{Freitag_2018}. 
Thanks to his advice,
the authors could eliminate a gap
between the blow-up time 
for solutions of \eqref{KS} 
in $\Phi$-measure 
and that in the classical sense. 
\\
\bibliographystyle{plain}

\end{document}